\def\a{0cm} \def\A{0.5cm}
\def\b{1cm} \def\B{1.5cm}
\def\c{2cm} \def\C{2.5cm}
\def\d{3cm} \def\D{3.5cm}
\def\e{4cm} \def\E{4.5cm}
\def\f{5cm} \def\F{5.5cm}
\def\g{6cm} \def\G{6.5cm}
\def\h{7cm} \def\H{7.5cm}
\def\i{8cm} \def\I{8.5cm}
 \def\K{10.5cm}
 \def\M{12.5cm}
 \def\O{14.5cm}\def\Q{16.5cm}\def\S{18.5cm}
\def\U{20.5cm}\def\W{22.5cm}\def\Y{24.5cm}\def\ZZ{26.5cm}
 \def\ZZ{26.5cm}\def\Za{28.5cm}\def\Zb{30.5cm}\def\Zc{32.5cm}
\def\Zi{50.5cm}
			\def\DUD{\begin{tikzpicture}[scale=0.4]
				\draw [\styleGrille] (\A,\A) -- (\A,\B);
				\draw [\styleGrille] (\B,\A) -- (\B,\B);
				\draw [\styleGrille] (\A,\A) -- (\B,\A);
				\draw [\styleGrille] (\A,\B) -- (\B,\B);
				\draw [\styleGrille] (\A,\A) -- (\B,\B);
				

                \pointt{\A}{\A} 
				\pointt{\B}{\B} 
				\draw (\A,\B) node {$\times$};\draw (\B,\A) node {$\times$};

			\end{tikzpicture}}
\def\sizePoint{3pt}
\def\sizePointt{6pt}
\newcommand{\point}[2]{\fill (canvas cs:x=#1,y=#2) circle (\sizePoint); }
\newcommand{\pointt}[2]{\fill (canvas cs:x=#1,y=#2) circle (\sizePointt); }
\DeclareMathOperator{\Seq}{\textsc{Seq}}
\DeclareMathOperator{\Seqplus}{\Seq^{+}}
\newtheorem{thm}{Theorem}
\newtheorem{cor}[thm]{Corollary}
\title{Bijections between  directed animals, multisets and Grand-Dyck paths}
\author[1]{Jean-Luc Baril\thanks{Corresponding author: \texttt{barjl@u-bourgogne.fr}}}
\author[2]{David Bevan}
\author[1]{Sergey Kirgizov}
\affil[1]{LIB, Univ. Bourgogne Franche-Comt{\'e}, France}
\affil[2]{University of Strathclyde, Glasgow, Scotland, United Kingdom}
\def\styleGrille{densely dotted}
\begin{document}
\maketitle

\begin{abstract} An $n$-multiset of $[k]=\{1,2,\ldots, k\}$ consists of a set of $n$ elements from $[k]$ where each element can be repeated. We present the bivariate
generating function for $n$-multisets of $[k]$ with no consecutive elements. For $n=k$, these multisets have the same enumeration as directed
animals in the square lattice. Then we give constructive bijections between directed animals, multisets with no consecutive elements and Grand-Dyck paths avoiding the pattern $DUD$, and we show how  classical and novel statistics are transported by these bijections.
\end{abstract}

{\bf Keywords:} Multisets; directed animals; Grand-Dyck paths; Motzkin, Catalan.


\section{Introduction and motivation}

For several decades, directed animals have been widely studied in the literature.
 They are special lattice point configurations, and their close links with certain problems of thermodynamics, in particular with the problem of directed percolation \cite{Broa,CoGu},  gives them an important place in the domains of theoretical physics and combinatorics. The problem of the enumeration of animals of a given area was stated for the first time by Harary in \cite{Har}. Then, Dhar, Phani and Barma \cite{Dhar} provided a first closed form for counting directed animals on the square and triangular lattices with respect to area. Later other proofs of this result were given using new combinatorial structures such as heaps of pieces \cite{Vie} or gas models \cite{Alb,Bou1,Dha,Leb}. In \cite{Bach} Bacher provides a generating function for the total site perimeter on the square and triangular lattices, solving a conjecture proposed in 1996 by Conway \cite{Conw}. But the problem of finding the generating function for the enumeration of directed animals according to the area and perimeter still remains open. According to \cite{Marck}, this function is not believed to be D-finite.  Hoping to capture properties on the perimeter, other studies present one-to-one correspondences between directed animals and some restricted classes of already known combinatorial objects such as {\it guingois} trees \cite{Betr}, heaps of dimers \cite{Bou}, forests of 1-2 trees \cite{Barc}, permutations avoiding the patterns $321$ and $4\bar{1}523$ \cite{Barc}, and lattice paths \cite{Bousq,Gou}.

 The purpose of this paper is to present new combinatorial classes in bijection with directed animals on the square and triangular lattices. After preliminary discussions on multisets in Section 2, we exhibit in Section 3 a correspondence with Grand-Dyck paths avoiding the pattern $DUD$ that is in turn in correspondence with multisets with no consecutive elements. Then, we show how these bijections transport classical and novel parameters on these classes, which opens a new way to explore statistics on  directed animals in the  well-known contexts of Dyck paths and multisets.

\section{Preliminaries}
 An $n$-{\it multiset} on $[k]=\{1,2,\ldots, k\}$ consists of a set of $n$ elements from  $[k]$ where we permit each element to be repeated \cite{Knu,Stan}.
  Throughout this paper, an $n$-multiset $\pi$ will be represented by the unique  sequence $\pi_1\pi_2\ldots \pi_n$ of its elements ordered in non-decreasing order,
  {\it e.g.} the multiset $\{1,1,2,2,3,3\}$ will be written $112233$. For $n,k\geq 1$, let $\mathcal{M}_{n,k}$  be the set of $n$-multisets of $[k]$.
  We set  $\mathcal{M}_n=\mathcal{M}_{n,n}$ and $\mathcal{M}=\bigcup_{n\geq 1}\mathcal{M}_n$.
  For instance, we have $\mathcal{M}_{3,2}=\{111,112,122,222\}$ and $\mathcal{M}_{3}=\mathcal{M}_{3,2}\cup\{113,123,133,223,233,333\}$.
   The {\it graphical representation} of a multiset $\pi\in \mathcal{M}_{n,k}$ is the set of points in the plane at coordinates $(i,\pi_i)$ for $i\in [n]$.
    Whenever none of the points $(i,\pi_i)$ lie below the diagonal $y=x$, {\it i.e.},  $i\leq \pi_i$ for all $i\in [n]$, $\pi$ will be called
    {\it superdiagonal}. Let $\mathcal{M}^s_{n,k}$ (resp.  $\mathcal{M}^s_{n}$) be the set of superdiagonal $n$-multisets of $[k]$ (resp. of $[n]$) and $\mathcal{M}^s=\bigcup_{n\geq 1}\mathcal{M}^s_n$.

From a multiset $\pi\in \mathcal{M}_{n,k}$, we consider the path of length $n+k$ on its graphical representation  with up and right moves along the edges of the squares that goes from the lower-left corner $(0,0)$
to the upper-right corner  $(n,k)$ and  leaving all the points $(i,\pi_i)$, $i\in[n]$, to the right and remaining always as
close to the line $x=n$ as possible (see the left part of Figure~\ref{fig1} for an example). Then, the number of $n$-multisets of $[k]$ is the number of possibilities to choose $n$ right moves among $k+n-1$ moves (the first is necessarily an up move), that is the binomial coefficient $\binom{k+n-1}{n}$ (see for instance \cite{Stan}).
Reading this path from left to right, we construct a lattice path of length $n+k$ from $(0,0)$ to $(n+k,n-k)$ by replacing any up-move with up step $U=(1,1)$ and
any right-move with down step $D=(1,-1)$. Clearly, this path starts with $U$ and consists of $n$ up steps and $k$ down steps. As a byproduct whenever $n=k$,
this construction induces a bijection  $\Phi$ between $\mathcal{M}_n$  and the set $\mathcal{GD}_n$  of Grand-Dyck paths of semilength $n$ starting with an up-step,
that is the set of paths from $(0,0)$ to $(2n,0)$ starting with $U$ and consisting of $U$ and $D$ steps. Moreover, the image by $\Phi$ of  $\mathcal{M}^s_n$ is
 the set $\mathcal{D}_n$ of Dyck paths of semilength $n$, {\it i.e.} the subset of paths in $\mathcal{GD}_n$ that do not cross the $x$-axis.
See Figure~\ref{fig1} for two examples of this construction.
\begin{thm}\label{thmMGD}
The map $\Phi$ is a bijection from $\mathcal{M}_n$ to $\mathcal{GD}_n$, and the image  of $\mathcal{M}^{s}_n$ is $\mathcal{D}_n$.
\end{thm}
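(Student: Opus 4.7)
My strategy is to turn the geometric definition of $\Phi$ into a closed-form word, read off an explicit inverse, and then pin down the superdiagonal subclass by a height calculation. For the first step, I would observe that because $\pi_1\le\pi_2\le\cdots\le\pi_n$, the greedy NE-path described in the excerpt is forced to make its $i$-th right-move at height exactly $\pi_i$ (any other non-decreasing choice of right-move heights either fails the ``close to $x=n$'' requirement or leaves a point on the wrong side). Converting up-moves into $U$ and right-moves into $D$ then gives
\[
\Phi(\pi) \;=\; U^{\pi_1}\,D\,U^{\pi_2-\pi_1}\,D\,\cdots\,D\,U^{\pi_n-\pi_{n-1}}\,D\,U^{n-\pi_n}.
\]
This word has $n$ copies of $U$ and $n$ copies of $D$, starts with $U$ (since $\pi_1\ge 1$), and runs from $(0,0)$ to $(2n,0)$, so it lies in $\mathcal{GD}_n$.

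Next I would exhibit the inverse. Every $P\in\mathcal{GD}_n$ admits a unique block decomposition $P=U^{b_0}\,D\,U^{b_1}\,D\cdots D\,U^{b_n}$ with $b_0\ge 1$, each $b_i\ge 0$, and $\sum_i b_i=n$. Setting $\pi_i:=b_0+b_1+\cdots+b_{i-1}$ produces a non-decreasing sequence with $1\le\pi_1$ and $\pi_n=n-b_n\le n$, so $\pi\in\mathcal{M}_n$; a substitution check then confirms $\Phi(\pi)=P$, which establishes that $\Phi$ is a bijection.

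For the superdiagonal claim I would track the height reached by $\Phi(\pi)$. A routine induction on $j$ shows that the height just after the $j$-th $D$-step equals $\pi_j-j$; since the height only rises inside $U$-runs and only falls inside $D$-runs, the global minimum of the path is
\[
\min\bigl\{\,0,\ \min_{1\le j\le n}(\pi_j-j)\,\bigr\}.
\]
Hence $\Phi(\pi)$ remains weakly above the $x$-axis -- i.e.\ belongs to $\mathcal{D}_n$ -- if and only if $\pi_j\ge j$ for every $j\in[n]$, which is exactly the superdiagonal condition.

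I do not anticipate a real obstacle: the argument is mostly careful bookkeeping. The one point that merits a moment of thought is the claim that the global minimum of the height is attained at one of the post-$D$ landmarks, which follows immediately from monotonicity of height within each maximal $U$- or $D$-run.
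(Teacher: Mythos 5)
Your proposal is correct and takes essentially the same route as the paper, which states Theorem~\ref{thmMGD} without a formal proof and treats it as immediate from the construction described just before it: your explicit word $U^{\pi_1}D\,U^{\pi_2-\pi_1}D\cdots D\,U^{n-\pi_n}$, the block-decomposition inverse, and the observation that the height after the $j$-th $D$ equals $\pi_j-j$ simply make that implicit argument precise. No gaps.
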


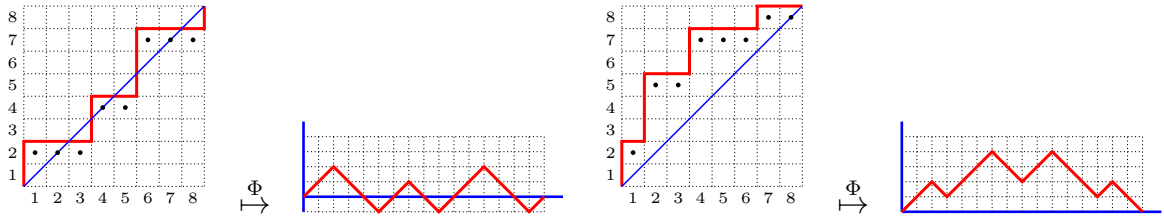
\begin{figure}[ht]
	
		\begin{center}
			\begin{tikzpicture}[scale=0.3]
				\draw [\styleGrille] (\A,\A) -- (\A,\I); \draw (\b,\a) node {\tiny 1};
				\draw [\styleGrille] (\B,\A) -- (\B,\I); \draw (\c,\a) node {\tiny 2};
				\draw [\styleGrille] (\C,\A) -- (\C,\I); \draw (\d,\a) node {\tiny 3};
				\draw [\styleGrille] (\D,\A) -- (\D,\I); \draw (\e,\a) node {\tiny 4};
				\draw [\styleGrille] (\E,\A) -- (\E,\I); \draw (\f,\a) node {\tiny 5};
				\draw [\styleGrille] (\F,\A) -- (\F,\I); \draw (\g,\a) node {\tiny 6};
				\draw [\styleGrille] (\G,\A) -- (\G,\I); \draw (\h,\a) node {\tiny 7};
				\draw [\styleGrille] (\H,\A) -- (\H,\I); \draw (\i,\a) node {\tiny 8};
				\draw [\styleGrille] (\I,\A) -- (\I,\I);
				\draw [\styleGrille] (\A,\A) -- (\I,\A); \draw (\a,\b) node {\tiny 1};
				\draw [\styleGrille] (\A,\B) -- (\I,\B); \draw (\a,\c) node {\tiny 2};
				\draw [\styleGrille] (\A,\C) -- (\I,\C); \draw (\a,\d) node {\tiny 3};
				\draw [\styleGrille] (\A,\D) -- (\I,\D); \draw (\a,\e) node {\tiny 4};
				\draw [\styleGrille] (\A,\E) -- (\I,\E); \draw (\a,\f) node {\tiny 5};
				\draw [\styleGrille] (\A,\F) -- (\I,\F); \draw (\a,\g) node {\tiny 6};
				\draw [\styleGrille] (\A,\G) -- (\I,\G); \draw (\a,\h) node {\tiny 7};
				\draw [\styleGrille] (\A,\H) -- (\I,\H); \draw (\a,\i) node {\tiny 8};
				\draw [\styleGrille] (\A,\I) -- (\I,\I);
				\draw [solid,line width=0.4mm,color=red] (\A,\A) -- (\A,\C) -- (\D,\C) -- (\D,\E) -- (\F,\E) -- (\F,\H) -- (\I,\H)  -- (\I,\I);
\draw [solid, line width=0.2mm, color=blue] (\A,\A) -- (\I,\I);
				\point{\b}{\c} 
				\point{\c}{\c} 
				\point{\d}{\c} 
				\point{\e}{\e} 
				\point{\f}{\e} 
				\point{\g}{\h} 
				\point{\h}{\h} 
				\point{\i}{\h} 
			\end{tikzpicture}
~~$\stackrel{\Phi}{\mapsto}$~~
\begin{tikzpicture}[scale=0.1]
            \draw[\styleGrille] (\A,\A)-- (\Zc,\A);
             \draw[\styleGrille] (\A,\E)-- (\Zc,\E);
              \draw[solid,line width=0.35mm,color=blue](\A,\C)-- (35,\C);
               \draw[\styleGrille] (\A,\G)-- (\Zc,\G);
               \draw[\styleGrille] (\A,\I)-- (\Zc,\I);
              \draw[\styleGrille] (\A,\K)-- (\Zc,\K);
            \draw[solid,line width=0.35mm,color=blue] (\A,\A) -- (\A,\M);
             \draw[\styleGrille] (\C,\A) -- (\C,\K);\draw[\styleGrille] (\E,\A) -- (\E,\K);\draw[\styleGrille] (\G,\A) -- (\G,\K);
             \draw[\styleGrille] (\I,\A) -- (\I,\K);\draw[\styleGrille] (\K,\A) -- (\K,\K);\draw[\styleGrille] (\M,\A) -- (\M,\K);
             \draw[\styleGrille] (\O,\A) -- (\O,\K);\draw[\styleGrille] (\Q,\A) -- (\Q,\K);\draw[\styleGrille] (\S,\A) -- (\S,\K);
             \draw[\styleGrille] (\U,\A) -- (\U,\K);\draw[\styleGrille] (\W,\A) -- (\W,\K);\draw[\styleGrille] (\Y,\A) -- (\Y,\K);

             \draw[\styleGrille] (\ZZ,\A) -- (\ZZ,\K);
             \draw[\styleGrille] (\Za,\A) -- (\Za,\K);
             \draw[\styleGrille] (\Zb,\A) -- (\Zb,\K);
             \draw[\styleGrille] (\Zc,\A) -- (\Zc,\K);

            \draw[solid,line width=0.4mm,color=red] (\A,\C)--(\E,\G) -- (\K,\A) -- (\O,\E) -- (\S,\A) -- (\Y,\G)  -- (\Zb,\A) -- (\Zc,\C);
         \end{tikzpicture}
         ~
			\begin{tikzpicture}[scale=0.3]
				\draw [\styleGrille] (\A,\A) -- (\A,\I); \draw (\b,\a) node {\tiny 1};
				\draw [\styleGrille] (\B,\A) -- (\B,\I); \draw (\c,\a) node {\tiny 2};
				\draw [\styleGrille] (\C,\A) -- (\C,\I); \draw (\d,\a) node {\tiny 3};
				\draw [\styleGrille] (\D,\A) -- (\D,\I); \draw (\e,\a) node {\tiny 4};
				\draw [\styleGrille] (\E,\A) -- (\E,\I); \draw (\f,\a) node {\tiny 5};
				\draw [\styleGrille] (\F,\A) -- (\F,\I); \draw (\g,\a) node {\tiny 6};
				\draw [\styleGrille] (\G,\A) -- (\G,\I); \draw (\h,\a) node {\tiny 7};
				\draw [\styleGrille] (\H,\A) -- (\H,\I); \draw (\i,\a) node {\tiny 8};
				\draw [\styleGrille] (\I,\A) -- (\I,\I);
				\draw [\styleGrille] (\A,\A) -- (\I,\A); \draw (\a,\b) node {\tiny 1};
				\draw [\styleGrille] (\A,\B) -- (\I,\B); \draw (\a,\c) node {\tiny 2};
				\draw [\styleGrille] (\A,\C) -- (\I,\C); \draw (\a,\d) node {\tiny 3};
				\draw [\styleGrille] (\A,\D) -- (\I,\D); \draw (\a,\e) node {\tiny 4};
				\draw [\styleGrille] (\A,\E) -- (\I,\E); \draw (\a,\f) node {\tiny 5};
				\draw [\styleGrille] (\A,\F) -- (\I,\F); \draw (\a,\g) node {\tiny 6};
				\draw [\styleGrille] (\A,\G) -- (\I,\G); \draw (\a,\h) node {\tiny 7};
				\draw [\styleGrille] (\A,\H) -- (\I,\H); \draw (\a,\i) node {\tiny 8};
				\draw [\styleGrille] (\A,\I) -- (\I,\I);
				\draw [solid,line width=0.4mm,color=red] (\A,\A) -- (\A,\C) -- (\B,\C) -- (\B,\F) -- (\D,\F) -- (\D,\H) -- (\G,\H) -- (\G,\I) -- (\I,\I);
\draw [solid, line width=0.2mm, color=blue] (\A,\A) -- (\I,\I);
				\point{\b}{\c} 
				\point{\c}{\f} 
				\point{\d}{\f} 
				\point{\e}{\h} 
				\point{\f}{\h} 
				\point{\g}{\h} 
				\point{\h}{\i} 
				\point{\i}{\i} 
			\end{tikzpicture}
~~$\stackrel{\Phi}{\mapsto}$~~
\begin{tikzpicture}[scale=0.1]
            \draw[solid,line width=0.35mm,color=blue] (\A,\A)-- (35,\A);
             \draw[\styleGrille] (\A,\E)-- (\Zc,\E);
              \draw[\styleGrille] (\A,\C)-- (\Zc,\C);
               \draw[\styleGrille] (\A,\G)-- (\Zc,\G);
               \draw[\styleGrille] (\A,\I)-- (\Zc,\I);
              \draw[\styleGrille] (\A,\K)-- (\Zc,\K);

            \draw[solid,line width=0.35mm,color=blue] (\A,\A) -- (\A,\M);
             \draw[\styleGrille] (\C,\A) -- (\C,\K);\draw[\styleGrille] (\E,\A) -- (\E,\K);\draw[\styleGrille] (\G,\A) -- (\G,\K);
             \draw[\styleGrille] (\I,\A) -- (\I,\K);\draw[\styleGrille] (\K,\A) -- (\K,\K);\draw[\styleGrille] (\M,\A) -- (\M,\K);
             \draw[\styleGrille] (\O,\A) -- (\O,\K);\draw[\styleGrille] (\Q,\A) -- (\Q,\K);\draw[\styleGrille] (\S,\A) -- (\S,\K);
             \draw[\styleGrille] (\U,\A) -- (\U,\K);\draw[\styleGrille] (\W,\A) -- (\W,\K);\draw[\styleGrille] (\Y,\A) -- (\Y,\K);

             \draw[\styleGrille] (\ZZ,\A) -- (\ZZ,\K);
             \draw[\styleGrille] (\Za,\A) -- (\Za,\K);
             \draw[\styleGrille] (\Zb,\A) -- (\Zb,\K);
             \draw[\styleGrille] (\Zc,\A) -- (\Zc,\K);

            \draw[solid,line width=0.4mm,color=red] (\A,\A)--(\E,\E) -- (\G,\C) -- (\M,\I) -- (\Q,\E) -- (\U,\I) -- (\ZZ,\C) -- (\Za,\E) -- (\Zc,\A);
         \end{tikzpicture}
		\end{center}
	\caption{Illustration of the bijection $\Phi$ between multisets and lattice paths}
\label{fig1}
\end{figure}

   Now, let us define the set $\mathcal{M}^\star_{n,k}$  of  $n$-multisets of $[k]$ with no consecutive integers, {\it i.e.}, multisets $\pi$ such
   that $\pi_{i+1}\neq\pi_i+1$ for all $i\in [n-1]$. Then we set  $\mathcal{M}^\star_n=\mathcal{M}^\star_{n,n}$,
   $\mathcal{M}^{s,\star}_{n}=\mathcal{M}^\star_{n}\cap\mathcal{M}^s_{n}$, $\mathcal{M}^\star=\bigcup_{n\geq 1} \mathcal{M}^\star_n$ and
    $\mathcal{M}^{s,\star}=\bigcup_{n\geq 1} \mathcal{M}^{s,\star}_n$. On the other hand, if $\mathcal{P}$ is a set of lattice paths consisting of $U$ and $D$ steps, then we denote by $\mathcal{P}^\star$ the subset of $\mathcal{P}$ consisting of paths that do not contain any occurrence of the pattern $DUD$.

   Considering these notations, it is straightforward to obtain the following theorem.

\begin{thm}\label{thmMGDstar}
The map $\Phi$ induces a bijection from $\mathcal{M}^\star_n$ to $\mathcal{GD}^\star_n$, and from  $\mathcal{M}^{s,\star}_n$ to $\mathcal{D}^\star_n$.
\end{thm}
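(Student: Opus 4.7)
The plan is to reduce to Theorem \ref{thmMGD} by showing that the avoidance conditions match under $\Phi$: a multiset $\pi\in\mathcal{M}_n$ contains a pair with $\pi_{i+1}=\pi_i+1$ if and only if the Grand-Dyck path $\Phi(\pi)$ contains an occurrence of $DUD$. Combined with Theorem \ref{thmMGD}, this yields both statements at once, since the superdiagonality condition does not interact with pattern avoidance.

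To set up the analysis, I first describe the underlying grid path row by row. Because the grid path stays as close to the line $x=n$ as possible while leaving every point $(i,\pi_i)$ on its right, at each height $y$ that contains at least one point the horizontal segment of the path ends exactly at column $M_y:=\max\{i:\pi_i=y\}$, and the up-move to height $y+1$ then takes place at that column. At heights carrying no point, the path performs no right-move. Consequently, the column $a_y$ at which the path enters height $y$ is $M_{y'}$ for the largest $y'<y$ carrying a point (or $0$ if no such $y'$ exists).

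Under $\Phi$, each right-move becomes $D$ and each up-move becomes $U$, so a $DUD$ in $\Phi(\pi)$ corresponds precisely to a consecutive block of moves $R,U,R$ in the grid path: a right-move at some height $y$, then the up-move to $y+1$, then a right-move at height $y+1$. Using the description above, such a block exists iff there is a $y$ with both heights $y$ and $y+1$ occupied by points, and satisfying $a_y<M_y$ together with $M_{y+1}>M_y$.

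It remains to check that this condition is equivalent to the existence of an index $i$ with $\pi_i=y$ and $\pi_{i+1}=y+1$. If $\pi_i=y$ and $\pi_{i+1}=y+1$, then monotonicity of $\pi$ forces $M_y=i$ and $M_{y+1}\geq i+1$, while $a_y$ comes either from the rightmost index at a strictly smaller height (hence $<i$) or equals $0$, so $a_y<M_y$. Conversely, assuming $a_y<M_y=i$ and $M_{y+1}>i$, we get $\pi_i=y$ and some $\pi_j=y+1$ with $j\geq i+1$; since $\pi$ is non-decreasing and $\pi_{i+1}>y$ (as $i=M_y$) while $\pi_{i+1}\leq \pi_j=y+1$, we conclude $\pi_{i+1}=y+1$. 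The mildly delicate step is the row-by-row description of $\Phi$ (in particular identifying $a_y$ and $M_y$ as the endpoints of the horizontal segment at height $y$); once that is in place the equivalence is immediate, and the $\mathcal{M}^{s,\star}_n\to\mathcal{D}^\star_n$ statement follows by intersecting with the superdiagonal case of Theorem \ref{thmMGD}.
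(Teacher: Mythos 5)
Your proof is correct and follows the route the paper has in mind: the paper offers no written proof (it declares the theorem ``straightforward'' from the notations), and the intended argument is precisely your observation that under $\Phi$ the adjacencies $\pi_{i+1}=\pi_i+1$ correspond exactly to occurrences of $DUD$, with the superdiagonal case obtained by intersecting with Theorem~\ref{thmMGD}. Your row-by-row description of the grid path (the horizontal segment at an occupied height $y$ running from $M_{y'}$ to $M_y$) and the resulting equivalence are accurate, so your writeup is a valid filling-in of the details the paper omits.
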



It is well known (see for instance \cite{Mer,Sap,Sun}) that the cardinality of $\mathcal{D}^\star_n$ is  given by the general term of Motzkin sequence A001006 in \cite{Slo}.
 Then, using Theorem~\ref{thmMGDstar} the cardinality of the set $\mathcal{M}^{s,\star}_n$ is also counted by this sequence.

 Now, using combinatorial arguments, we prove that the cardinality of $\mathcal{M}^\star_n$
 is given by the general term of the  sequence A005773 in \cite{Slo} which also counts  directed animals with a given area on the square lattice.

\begin{thm}\label{thmMstarGenFun}
  The ordinary generating function for
  $n$-multisets of $[n]$ with no consecutive integers
  is
  \[
  {\frac {1-3\,z-\sqrt {1-2z-3\,{z}^{2}}}{6\,z-2}}.
  \]
\end{thm}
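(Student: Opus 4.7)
My plan is to invoke Theorem~\ref{thmMGDstar}, which reduces the claim to counting the set $\mathcal{GD}^\star_n$ of Grand-Dyck paths of semilength $n$ that start with $U$ and avoid $DUD$. Any path in $\mathcal{GD}_n$ decomposes uniquely as a concatenation of primitive arches, where an arch is either \emph{positive}, of the form $U\alpha D$ with $\alpha$ a Dyck path, or \emph{negative}, of the form $D\beta U$ with $\beta$ a non-positive Dyck path. Starting with $U$ forces the first arch to be positive.

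Next, I would carry out a short case analysis at each type of junction to pin down the avoidance constraint. A $DUD$ internal to a positive arch $U\alpha D$ must lie inside $\alpha$, because $\alpha$ is a Dyck path that (if non-empty) ends with $D$; similarly for $D\beta U$. A $DUD$ straddling two consecutive arches occurs in exactly two configurations: the length-$2$ arch $UD$ immediately after any positive arch, and the length-$2$ arch $DU$ immediately before any negative arch. Opposite-sign junctions are always safe, and no $DUD$ can span three arches (each arch has length at least $2$). Grouping consecutive same-sign arches into maximal \emph{runs}, the rule becomes: within a positive run only the first arch may equal $UD$, within a negative run only the last arch may equal $DU$, and boundaries between runs are unconstrained. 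Using the fact recalled just above that Dyck paths avoiding $DUD$ have generating function $1+zM(z)$ with $M(z)$ the Motzkin generating function, together with a reflect-then-reverse bijection showing that non-positive Dyck paths avoiding $DUD$ are equinumerous with their positive counterpart, the generating function for a primitive arch of either sign avoiding $DUD$ is $A(z)=z+z^{2}M(z)$.

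Translating the run rule yields $r(z)=A(z)/(1-z^{2}M(z))$ for a positive, and symmetrically for a negative, run. A Grand-Dyck path starting with $U$ is a non-empty alternating sequence of runs beginning with a positive run, so its generating function is
\[
G(z)\;=\;\frac{r(z)}{1-r(z)}\;=\;\frac{A(z)}{1-z-2z^{2}M(z)}.
\]
Substituting the closed form of $M(z)$ gives the identities $A(z)=(1+z-\sqrt{1-2z-3z^{2}})/2$ and $1-z-2z^{2}M(z)=\sqrt{1-2z-3z^{2}}$. Rationalising using the factorisation $1-2z-3z^{2}=(1-3z)(1+z)$ converts $G(z)$ into the claimed expression $(1-3z-\sqrt{1-2z-3z^{2}})/(6z-2)$. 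The main obstacle, in my view, is the junction case analysis: one must systematically rule out every position where a $DUD$ could sit (including across short arches) and verify that the two listed forbidden configurations are truly exhaustive; once this is settled, the run bookkeeping and the final algebraic simplification go through cleanly.
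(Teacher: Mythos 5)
Your proposal is correct, but it proves the theorem by a genuinely different route from the paper. The paper never touches lattice paths here: it writes down the bivariate generating function $f(z,u)=\Seq[u]\cdot u\Seqplus[z]\cdot\Seq\big[\Seqplus[u]\,u\,\Seqplus[z]\big]\cdot\Seq[u]$ for $n$-multisets of $[k]$ with no consecutive integers by scanning the integers $1,\dots,k$ and marking multiplicities, and then extracts the diagonal $n=k$ as a constant term via the residue method of Stanley. That argument is shorter and yields the full two-parameter array of Table~\ref{tab1} as a by-product, at the cost of invoking analytic diagonal-extraction machinery. You instead pass through Theorem~\ref{thmMGDstar} and enumerate $\mathcal{GD}^\star_n$ directly by an arch-and-run decomposition; your junction analysis is exhaustive and correct (a positive arch ends in $D$ and a negative arch begins with $D$, so the only straddling occurrences of $DUD$ are a $UD$ arch immediately following a positive arch and a $DU$ arch immediately preceding a negative arch, and no occurrence can span three arches), and the algebra $A(z)=\bigl(1+z-\sqrt{1-2z-3z^2}\bigr)/2$, $1-z-2z^2M(z)=\sqrt{1-2z-3z^2}$ does simplify to the stated closed form. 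Your approach is more elementary (no residues) but leans on the Motzkin enumeration of $\mathcal{D}^\star_n$, which the paper only cites; also, the equinumerosity of non-positive and positive $DUD$-avoiding arches is most cleanly obtained from plain word reversal $P\mapsto P^r$ (as used later in the paper), which sends Dyck paths to non-positive paths and preserves occurrences of $DUD$ because that word is a palindrome --- your ``reflect-then-reverse'' phrasing should be checked, since reflecting and then reversing the word returns a path weakly above the axis. Incidentally, your computation also furnishes an independent proof of Corollary~\ref{corAvDUD}.
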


\begin{table}[t]
\begin{center}
\scalebox{1}{$\begin{array}{c|ccccccccc}
k\backslash n & 1 & 2 & 3 & 4 & 5 & 6 & 7 & 8&9\\
\hline
1 & 1 & 1& 1& 1& 1& 1 & 1 & 1 &1\\
2 & 2 & 2 & 2& 2& 2& 2 & 2 & 2 &2\\
3 & 3  &4  & 5&  6& 7 &  8 & 9 &10&11\\
4 & 4  &7  &10  &13&   16& 19  &  22&25&28   \\
5 & 5 &11 &18  &26  &35   &  45&  56 &  68 &81  \\
6 & 6 &16  &30  &48  &70   &96    & 126  & 160  &198 \\
\end{array}$
}
\end{center}\caption
{The number of $n$-multisets of $[k]$ with no consecutive integers}
\label{tab1}\end{table}

\begin{proof}
Let $f(z,u)=\sum_{n,k\geq 1}f_{n,k}\,z^nu^k$ be the bivariate generating function for the set $\mathcal{M}^\star_{n,k}$.
That is, the coefficient $f_{n,k}$ is the number of $n$-multisets of $[k]$ with no consecutive integers.
We build such a multiset by considering each integer from $\{1,\ldots ,k\}$ in turn, and marking how many times it occurs in the multiset.
Each of the $k$ integers contributes a factor of $u$, and each occurrence in the multiset contributes a factor of $z$.

Using this approach, $f(z,u)$ can be written in the following form:
\begin{equation}\label{eqfzu}
f(z,u)  \;=\;  \Seq[u] \:\times\:  u\,\Seqplus[z] \:\times\: \Seq\big[\Seqplus[u]\,u\,\Seqplus[z]\big] \:\times\:  \Seq[u]
,
\end{equation}
in which we make use of notation of Flajolet and Sedgewick~\cite{FS}: $\Seq[x]=1/(1-x)=1+x+x^2+\dots$ represents the occurrence of zero or more items counted by $x$, and $\Seqplus[x]=x/(1-x)=x+x^2+\dots$ represents the occurrence of one or more items.

The first term of~\eqref{eqfzu}, $\Seq[u]$, represents the (possibly empty) initial sequence of integers not in the multiset.
The second term, $u\Seqplus[z]$, represents the first integer in the multiset, occurring one or more times.

Each subsequent integer, if any, that occurs one or more times in the multiset is preceded by at least one integer not in the multiset, since it does not contain consecutive integers. So each such additional integer in the multiset is represented by $\Seqplus[u]u\Seqplus[z]$. The third term of~\eqref{eqfzu} thus represents all of the subsequent integers in the multiset. Finally, the fourth term, $\Seq[u]$, represents the (possibly empty) final sequence of integers not in the multiset.

Expansion and simplification yields
\[
f(z,u) \;=\; {\frac {uz}{ \left( 1-u \right)\left( 1-u-z+uz-{u}^{2}z \right)  }}
.
\]
Small values of $f_{n,k}$ are shown in Table~\ref{tab1}.

As a consequence of the bijection $\Phi$, the set of lattice paths of length $n+k$ starting at $(0,0)$, ending at $(n+k,n-k)$ consisting of $n$ up steps
and $k$ down steps, starting with an up-step and avoiding the pattern $DUD$ has a bivariate generating function given by $f(zu,z/u)$.

In order to obtain the generating function for the set $\mathcal{M}^{\star}_{n}=\mathcal{M}^\star_{n,n}$, we require the diagonal $\Delta(f)(z)=\sum_{n\geq 1}f_{n,n}z^n= [u^0]f(z/u, u)$, where $[u^0]g(u)$ is the constant coefficient of $u$ in $g(u)$.

If $g(u)=g(u,z)$ is a formal Laurent series, then the constant term $[u^0]g(u)$ is given by the sum of the residues of $u^{-1}g(u)$ at
those poles $\alpha$ of $g(u)$ for which $\lim_{z\to0}\alpha(z)=0$ (see~\cite[Section 6.3]{Stan2}).

In our case, $f(z/u, u)$ has a single pole $\alpha(z)$ for which $\alpha(0)=0$, and the residue of $u^{-1}f(z/u, u)$ at $\alpha(z)$ is
\[
{\frac {1-3\,z-\sqrt {1-2z-3\,{z}^{2}}}{6\,z-2}}
\]
as required.
\end{proof}

As a consequence of the bijection $\Phi$, we have the following.
\begin{cor}\label{corAvDUD}
   The ordinary generating function for Grand-Dyck paths of semilength $n$ starting with an up-step and avoiding the pattern $DUD$ is counted by sequence A005773 in \cite{Slo}.
\end{cor}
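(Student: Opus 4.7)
The plan is simply to chain the two preceding results. By Theorem~\ref{thmMGDstar} the map $\Phi$ restricts to a bijection between $\mathcal{M}^\star_n$ and $\mathcal{GD}^\star_n$, so for each $n\geq 1$ these two sets are equinumerous and hence share the same ordinary generating function. By Theorem~\ref{thmMstarGenFun} this common generating function is
\[
\frac{1-3z-\sqrt{1-2z-3z^2}}{6z-2},
\]
so the entire argument reduces to recognising this algebraic series as the one attached to sequence A005773.

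That final identification is a bookkeeping task: I would expand the closed form to read off its initial coefficients $1,2,5,13,35,96,\ldots$ (the same values appearing on the diagonal of Table~\ref{tab1}) and compare them with the terms of A005773 as listed in~\cite{Slo}. The OEIS entry itself records exactly this algebraic generating function, up to a harmless indexing shift since our sequence is indexed from $n=1$ whereas A005773 is customarily indexed from $n=0$. As an alternative cross-check, the same generating function is the classical one for directed animals on the square lattice enumerated by area, as established by Dhar, Phani and Barma~\cite{Dhar}; either reference closes the identification.

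Because both ingredients---the restricted bijection $\Phi\colon\mathcal{M}^\star_n\to\mathcal{GD}^\star_n$ and the explicit closed form for its generating function---are already proved, no genuine obstacle remains: the corollary is a one-line concatenation. The only point requiring mild attention is the trivial alignment of indexing conventions between the paper's sequence and the OEIS entry, which is why I would prefer to state the identification both by a direct coefficient comparison and by a pointer to~\cite{Dhar}, so that the reader need not consult the OEIS to verify the match.
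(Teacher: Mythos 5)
Your proposal is correct and follows exactly the paper's own route: the paper introduces the corollary with ``As a consequence of the bijection $\Phi$,'' i.e.\ it too simply chains Theorem~\ref{thmMGDstar} (the restriction of $\Phi$ to a bijection $\mathcal{M}^\star_n\to\mathcal{GD}^\star_n$) with the generating function of Theorem~\ref{thmMstarGenFun} and the identification of that series with A005773. Your extra remarks on coefficient checking and indexing are harmless bookkeeping and do not change the argument.
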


Sequence A005773 in \cite{Slo} counts diverse combinatorial objects, including various other types of lattice paths.
For example, Banderier et al prove in~\cite{Band} that Motzkin meanders (prefixes of Motzkin paths) are enumerated by this sequence. Sapanoukis et al.~\cite{Sap} state that Grand-Dyck paths of semilength $n$ starting with an up-step and avoiding the pattern $UDU$ are also counted by this sequence. We are unaware of a published proof of this, so present one very briefly here.
We use the fact that the construction behind $\Phi$ induces a bijection between these Grand-Dyck paths avoiding $UDU$
and $n$-multisets of $[n]$ in which no integer except $n$ occurs exactly once.

\begin{thm}\label{thmAvUDUMultisets}
  The set of $n$-multisets of $[n]$ in which no integer except $n$ occurs exactly once is counted by sequence A005773 in \cite{Slo}.
\end{thm}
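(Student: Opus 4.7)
The plan is to mirror the proof of Theorem~\ref{thmMstarGenFun} by setting up a bivariate generating function and extracting its diagonal. Let $g(z,u) = \sum_{n,k\geq 1} g_{n,k}\, z^n u^k$, where $g_{n,k}$ counts $n$-multisets of $[k]$ in which no integer except $k$ occurs exactly once. As in Theorem~\ref{thmMstarGenFun}, each of the integers $1,2,\ldots,k$ will contribute a factor $u$, and each occurrence in the multiset a factor $z$. For each of $1,\ldots,k-1$ the multiplicity is either $0$ or at least $2$, giving a factor $1 + z^2/(1-z)$; for the integer $k$ the multiplicity is unconstrained, giving $1/(1-z)$. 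Summing over $k \geq 1$ and simplifying will yield
\begin{equation*}
g(z,u) \;=\; \frac{u}{1-z-u+uz-uz^2}.
\end{equation*}

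To obtain the generating function for $g_{n,n}$, I will extract the diagonal $[u^0]\,g(z/u,u)$ via the residue method used in Theorem~\ref{thmMstarGenFun}. After clearing the $1/u$ factors, the denominator of $g(z/u,u)$ becomes a quadratic in $u$ whose roots are $\tfrac12\bigl((1+z) \pm \sqrt{1-2z-3z^2}\bigr)$, of which only $\alpha(z) = \tfrac12\bigl((1+z) - \sqrt{1-2z-3z^2}\bigr)$ vanishes at $z=0$. Computing the residue of $u^{-1}g(z/u,u)$ at $u = \alpha(z)$ will produce
\begin{equation*}
\sum_{n\geq 1} g_{n,n}\, z^n \;=\; \frac{(1+z) - \sqrt{1-2z-3z^2}}{2\sqrt{1-2z-3z^2}}.
\end{equation*}

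The last step will be to verify that this expression coincides with $\frac{1-3z-\sqrt{1-2z-3z^2}}{6z-2}$, the generating function from Theorem~\ref{thmMstarGenFun} that enumerates sequence A005773. This algebraic equality is the main (though routine) obstacle; it follows by rationalization, using the identities $(1+z)(1-3z) = 1-2z-3z^2$ and $(1-3z)^2 - (1-2z-3z^2) = -4z(1-3z)$, which together reduce the cross-multiplied form to an evident tautology. Every other step parallels the proof of Theorem~\ref{thmMstarGenFun} directly, so no new tools are needed.
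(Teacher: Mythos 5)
Your proposal is correct and follows essentially the same route as the paper: the bivariate generating function you construct is exactly the paper's $h(z,u)=\frac{u}{1-z-u(1-z+z^2)}$ (the paper phrases the construction as $\Seq\big[u(\Seq[z]-z)\big]\times u\Seq[z]$), and the diagonal is extracted by the same residue computation, with your $\alpha(z)$ being the unique root vanishing at $z=0$. The only addition is your explicit algebraic verification that the residue equals $\frac{1-3z-\sqrt{1-2z-3z^2}}{6z-2}$, which the paper simply asserts; your check is valid.
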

\begin{proof}
  Let $h(z,u)$ be the bivariate generating function for $n$-multisets of $[k]$ in which no integer except $k$ occurs exactly once.
  We have
  \[
  h(z,u)
  \;=\; \Seq\big[ u\,\big( \Seq[z]-z \big) \big] \:\times\: u\,\Seq[z]
  \;=\; \frac {u}{1-z-u\,(1-z+z^2)}
  ,
  \]
  where the first term of the construction represents no occurrence or at least two occurrences of each integer from $\{1,\dots,k-1\}$, and the second term represents zero or more occurrences of $k$. Extracting the diagonal then yields
  \[
  [u^0]h(z/u, u) \;=\; {\frac {1-3\,z-\sqrt {1-2z-3\,{z}^{2}}}{6\,z-2}}
  \]
  as required.
\end{proof}

\begin{cor}\label{corAvUDU}
   The ordinary generating function for Grand-Dyck paths of semilength $n$ starting with an up-step and avoiding the pattern $UDU$ is counted by sequence A005773 in \cite{Slo}.
\end{cor}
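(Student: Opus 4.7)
The plan is to derive the corollary as an immediate consequence of Theorem~\ref{thmAvUDUMultisets} by verifying that the bijection $\Phi$ of Theorem~\ref{thmMGD} restricts to a bijection between $n$-multisets of $[n]$ in which no integer except $n$ occurs exactly once, and Grand-Dyck paths of semilength $n$ that start with $U$ and avoid the pattern $UDU$. Once this restriction is established, the enumeration provided by Theorem~\ref{thmAvUDUMultisets} applies verbatim to the Grand-Dyck paths in $\mathcal{GD}_n$ avoiding $UDU$.

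To carry this out, I would first describe $\Phi(\pi)$ combinatorially. For $\pi = \pi_1\pi_2\cdots\pi_n \in \mathcal{M}_n$, adopting the conventions $\pi_0 = 0$ and $\pi_{n+1} = n$, the number of up-moves performed by the grid path in column $i$ equals $a_i := \pi_{i+1} - \pi_i$, and these blocks of up-moves are separated by single right-moves. Under $\Phi$, each up-move becomes a $U$ and each right-move becomes a $D$, so $\Phi(\pi)$ is the word obtained by writing blocks of $a_0, a_1, \ldots, a_n$ consecutive $U$'s, separated by single $D$'s. Any occurrence of the factor $UDU$ must therefore use the last $U$ of some block, the $D$ crossing into the next column, and the first $U$ of the following block. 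Hence $\Phi(\pi)$ contains $UDU$ if and only if there exists $i \in \{0, \ldots, n-1\}$ with $a_i \ge 1$ and $a_{i+1} \ge 1$.

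The main step is then to translate this condition back to the multiset. Using $a_i = \pi_{i+1} - \pi_i$, the condition ``$a_i \ge 1$ and $a_{i+1} \ge 1$'' becomes ``$\pi_i < \pi_{i+1} < \pi_{i+2}$'', which with the boundary conventions forces the middle value $\pi_{i+1}$ to be strictly less than $n$ and to occur exactly once in $\pi$. Conversely, if some $v < n$ occurs exactly once, say at position $j$, then $a_{j-1} \ge 1$ and $a_j \ge 1$, producing a $UDU$ factor. The main obstacle is the careful handling of the two boundary pairs $i = 0$ (where one must use $\pi_0 = 0$ to treat the smallest value) and $i = n-1$ (where one uses $\pi_{n+1} = n$ to treat the largest value, explaining why $n$ is the only value allowed to appear exactly once). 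Once this local equivalence is verified, $\Phi$ restricts to the desired bijection and Theorem~\ref{thmAvUDUMultisets} delivers the enumeration by sequence A005773.
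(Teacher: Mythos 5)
Your proposal is correct and follows exactly the route the paper intends: the corollary is obtained from Theorem~\ref{thmAvUDUMultisets} via the observation, stated just before that theorem, that $\Phi$ restricts to a bijection between Grand-Dyck paths avoiding $UDU$ and $n$-multisets of $[n]$ in which no integer except $n$ occurs exactly once. In fact you supply more detail than the paper does (which merely asserts this restriction without proof), and your block decomposition $U^{a_0}DU^{a_1}D\cdots DU^{a_n}$ with the boundary conventions $\pi_0=0$, $\pi_{n+1}=n$ correctly establishes the equivalence.
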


\section{From multisets to directed animals via Grand-Dyck paths}

A {\it directed animal} $A$ of area $n$ (or equivalently with $n$ nodes) in the triangular lattice is a subset of $n$ points  in the lattice
 containing $(0,0)$ and where any point in $A$ can be reached from $(0,0)$ with up-moves $(0,1)$, right-moves $(1,0)$ and diagonal moves $(1,1)$ by staying always in $A$. Directed animals in the square lattice are those that do not use diagonal moves. See the left part of Figure~\ref{fig2} for an example of directed animal in the triangular lattice, and we refer to references in Introduction for several combinatorial studies on these objects. Let $\mathcal{Q}_n$ (resp. $\mathcal{T}_n$) be the set of directed animals with $n$ nodes in the square (resp. triangular) lattice, then its cardinality is given by the $n$th term of the sequence  A005773 in \cite{Slo} (resp. by the binomial coefficient $\binom{2n-1}{n}$).
We set $\mathcal{Q}=\cup_{n\geq 1}\mathcal{Q}_n$, $\mathcal{T}=\cup_{n\geq 1}\mathcal{T}_n$ and obviously we have $\mathcal{Q}\subset\mathcal{T}$.

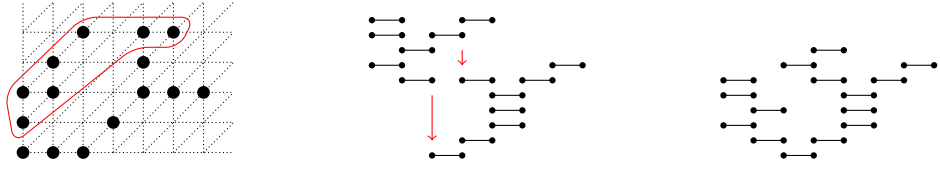
\begin{figure}[ht]
	
		\begin{center}
			\begin{tikzpicture}[scale=0.4]
				\draw [\styleGrille] (\A,\A) -- (\A,\F);
				\draw [\styleGrille] (\B,\A) -- (\B,\F);
				\draw [\styleGrille] (\C,\A) -- (\C,\F);
				\draw [\styleGrille] (\D,\A) -- (\D,\F);
				\draw [\styleGrille] (\E,\A) -- (\E,\F);
				\draw [\styleGrille] (\F,\A) -- (\F,\F);
				\draw [\styleGrille] (\G,\A) -- (\G,\F);
				\draw [\styleGrille] (\A,\A) -- (\H,\A);
				\draw [\styleGrille] (\A,\B) -- (\H,\B);
				\draw [\styleGrille] (\A,\C) -- (\H,\C);
				\draw [\styleGrille] (\A,\D) -- (\H,\D);
				\draw [\styleGrille] (\A,\E) -- (\H,\E);

				\draw [\styleGrille] (\A,\A) -- (\F,\F);
				\draw [\styleGrille] (\A,\B) -- (\E,\F);
				\draw [\styleGrille] (\A,\C) -- (\D,\F);
				\draw [\styleGrille] (\A,\D) -- (\C,\F);
				\draw [\styleGrille] (\A,\E) -- (\B,\F);
				
                \draw [\styleGrille] (\B,\A) -- (\G,\F);
				\draw [\styleGrille] (\C,\A) -- (\H,\F);
				\draw [\styleGrille] (\D,\A) -- (\H,\E);
				\draw [\styleGrille] (\E,\A) -- (\H,\D);
				\draw [\styleGrille] (\F,\A) -- (\H,\C);
				\draw [\styleGrille] (\G,\A) -- (\H,\B);


                \pointt{\A}{\A} 
				\pointt{\A}{\B} 
				\pointt{\B}{\A} 
				\pointt{\A}{\C} 
				\pointt{\B}{\C} 

				\pointt{\C}{\A} 
				\pointt{\D}{\B} 
				\pointt{\E}{\C} 

                \pointt{\G}{\C} 
				\pointt{\E}{\E} 
				\pointt{\E}{\D} 
				\pointt{\F}{\E} 

                 \pointt{\B}{\D} 
				\pointt{\C}{\E} 
				\pointt{\F}{\C} 
\draw[xshift=17mm,rounded corners=5pt, red](-1.5,0.8)--(-1.8,\C)--(0.8,\f)--(\E,\f)--(\e,\e)--(\C,\e)--cycle;
			\end{tikzpicture}
\qquad \qquad
          \begin{tikzpicture}[scale=0.4  ]

				\draw (\a,\a) -- (\b,\a);
                \point{\a}{\a} 
				\point{\b}{\a} 
                 \draw (\b,\A) -- (\c,\A);
                \point{\b}{\A} 
				\point{\c}{\A} 

				\draw (\c,\b) -- (\d,\b);
                \point{\d}{\b} 
				\point{\c}{\b} 
                 \draw (\c,\B) -- (\d,\B);
                \point{\d}{\B} 
				\point{\c}{\B} 

				\draw (\c,\c) -- (\d,\c);
                \point{\c}{\c} 
				\point{\d}{\c} 
                 \draw (\d,\C) -- (\e,\C);
                \point{\d}{\C} 
				\point{\e}{\C} 

                 \draw (\e,\d) -- (\f,\d);
                \point{\e}{\d} 
				\point{\f}{\d} 

				\draw (\c,\C) -- (\b,\C);
                \point{\b}{\C} 
				\point{\c}{\C} 

                 \draw (\b,\e) -- (\a,\e);
                \point{\b}{\e} 
				\point{\a}{\e} 
				\draw (\b,\E) -- (\c,\E);
                \point{\b}{\E} 
				\point{\c}{\E} 
               \draw (\a,\C) -- (-1,\C);
                \point{\a}{\C} 
				\point{-1cm}{\C} 
                 \draw (-1,\d) -- (-2,\d);
                \point{-2cm}{\d} 
				\point{-1cm}{\d} %
               \draw (-1,\D) -- (\a,\D);
                \point{\a}{\D} 
				\point{-1cm}{\D} 
                 \draw (-1,\e) -- (-2,\e);
                \point{-2cm}{\e} 
				\point{-1cm}{\e} 
                 \draw (-2,\E) -- (-1,\E);
                \point{-1cm}{\E} 
				\point{-2cm}{\E} 
               \draw [->,red] (0,\c) -- (0,\A);
               \draw [->,red] (\b,\D) -- (\b,\d);
			\end{tikzpicture}
\qquad \qquad
          \begin{tikzpicture}[scale=0.4  ]

				\draw (\a,\a) -- (\b,\a);
                \point{\a}{\a} 
				\point{\b}{\a} 
                 \draw (\b,\A) -- (\c,\A);
                \point{\b}{\A} 
				\point{\c}{\A} 

				\draw (\c,\b) -- (\d,\b);
                \point{\d}{\b} 
				\point{\c}{\b} 
                 \draw (\c,\B) -- (\d,\B);
                \point{\c}{\B} 
				\point{\d}{\B} 

				\draw (\c,\c) -- (\d,\c);
                \point{\c}{\c} 
				\point{\d}{\c} 
                 \draw (\d,\C) -- (\e,\C);
                \point{\d}{\C} 
				\point{\e}{\C} 

                 \draw (\e,\d) -- (\f,\d);
                \point{\e}{\d} 
				\point{\f}{\d} 

				\draw (\c,\C) -- (\b,\C);
                \point{\b}{\C} 
				\point{\c}{\C} 
                 \draw (\b,\d) -- (\a,\d);
                \point{\b}{\d} 
				\point{\a}{\d} 

				\draw (\b,\D) -- (\c,\D);
                \point{\b}{\D} 
				\point{\c}{\D} 

               \draw (\a,\A) -- (-1,\A);
                \point{\a}{\A} 
				\point{-1cm}{\A} 
                 \draw (-1,\b) -- (-2,\b);
                \point{-2cm}{\b} 
				\point{-1cm}{\b} 

               \draw (-1,\B) -- (\a,\B);
                \point{\a}{\B} 
				\point{-1cm}{\B} 
                 \draw (-1,\c) -- (-2,\c);
                \point{-2cm}{\c} 
				\point{-1cm}{\c} 

                 \draw (-2,\C) -- (-1,\C);
                \point{-1cm}{\C} 
				\point{-2cm}{\C} 
			\end{tikzpicture}
\end{center}
	\caption{A directed animal and its associated heap.}
\label{fig2}
\end{figure}

In the literature \cite{Bou,Vie,Vie2}, directed animals are often viewed as heaps obtained by dropping vertically dimers such that each dimer (except the first) touches
the one below by at least one  of its extremities. Indeed, from $A\in\mathcal{T}$, we  apply  a counterclockwise rotation of 45 degree of its graphical representation and
we replace  each point of $A$ with a dimer of width $\sqrt{2}/2$. See Figure~\ref{fig2} for an example of such a representation. Notice that directed animals
in $\mathcal{Q}$ correspond to heaps of dimers where no dimer has another dimer directly above it (such a heap will be
 called {\it strict}). Let $\mathcal{T}^s$ (resp. $\mathcal{Q}^s$) be the set of all subdiagonal directed animals in $\mathcal{T}$ (resp. $\mathcal{Q}$),
 {\it i.e.}, directed animals where all its points $(i,j)$ satisfy $j\leq i$.

Without losing accuracy, the sets  $\mathcal{T}$ and $\mathcal{Q}$ will also be used to designate respectively the set of heaps of dimers and the
set of strict heaps of dimers. Then, any  heap $A\in \mathcal{T}^s$ has a unique factorization into one of the four following forms (see  \cite{Bou}):

\begin{center} ($i$) $\begin{tikzpicture}[scale=0.25]
				
				\draw (\a,\a) -- (\b,\a);
                \point{\a}{\a} 
				\point{\b}{\a} 
			\end{tikzpicture}$~~~~~ ($ii$) $\begin{tikzpicture}[scale=0.25,baseline=0.5em]
				
				\draw (\a,\a) -- (\b,\a);
                \point{\a}{\a} 
				\point{\b}{\a} 
                \draw (\b,\A) -- (\c,\A);
                \point{\b}{\A} 
				\point{\c}{\A} 
                \draw [solid, line width=0.2mm] (\b,\A) -- (\b,\D);
                \draw [solid, line width=0.2mm] (\c,\A) -- (\f,\D);
                \draw (\C,\C) node {$B$};
			\end{tikzpicture}$ ~~~~~($iii$) $\begin{tikzpicture}[scale=0.25,baseline=0.5em]
				
				\draw (\a,\a) -- (\b,\a);
                \point{\a}{\a} 
				\point{\b}{\a} 
                \draw (\a,\A) -- (\b,\A);
                \point{\a}{\A} 
				\point{\b}{\A} 
                \draw [solid, line width=0.2mm] (\a,\A) -- (\a,\D);
                \draw [solid, line width=0.2mm] (\b,\A) -- (\e,\D);
                \draw (\B,\C) node {$B$};
			\end{tikzpicture}$ ~~~~~ ($iv$) $\begin{tikzpicture}[scale=0.25,baseline=0.5em]
				
				\draw (\a,\a) -- (\b,\a);
                \point{\a}{\a} 
				\point{\b}{\a} 
                \draw (\b,\A) -- (\c,\A);
                \point{\b}{\A} 
				\point{\c}{\A} 
                \draw [solid, line width=0.2mm] (\b,\A) -- (\b,\B);
                \draw [solid, line width=0.2mm] (\c,\A) -- (\f,\D);
                \draw (\a,\c) -- (\b,\c);
                \point{\a}{\c} 
				\point{\b}{\c} 
                \draw [solid, line width=0.2mm] (\a,\c) -- (\a,\f);
                \draw [solid, line width=0.2mm] (\b,\c) -- (\e,\f);
                \draw (\b,\D) node {$C$};\draw (\d,\C) node {$B$};
			\end{tikzpicture}$,
\end{center}
\vskip0.2cm where $B,C\in \mathcal{T}^s$.
Moreover, any heap $A\in \mathcal{T}\backslash\mathcal{T}^s$ has a unique factorization:

 \begin{center}
 ($v$) \begin{tikzpicture}[scale=0.25,baseline=0.5em]
				
                \draw (\b,\a) -- (\c,\a);
                \point{\b}{\a} 
				\point{\c}{\a} 
                \draw [solid, line width=0.2mm] (\b,\a) -- (\b,\b);
                \draw [solid, line width=0.2mm] (\c,\a) -- (\f,\d);
                \draw (\a,\B) -- (\b,\B);
                \point{\a}{\B} 
				\point{\b}{\B} 
                \draw [solid, line width=0.2mm] (\b,\B) -- (\e,\E);
                \draw [solid, line width=0.2mm] (\a,\B) -- (-1,\E);
                \draw [solid, line width=0.2mm] (\b,\B) -- (\e,\E);
                 \draw (\d,\c) node {$B$};
                 \draw (\A,\D) node {$C$};
			\end{tikzpicture},
\end{center}
     where $B\in\mathcal{T}^s$ and $C\in\mathcal{T}$.

The factorization of $A\in\mathcal{Q}$ (resp. $A\in\mathcal{Q}^s$) is obtained after omitting the case ($iii$). Translating these factorizations using functional equations involving the generating functions $T(z)$ and $T^s(z)$ for $\mathcal{T}$ and $\mathcal{T}^s$ (resp. $Q(z)$ and $Q^s(z)$ for $\mathcal{Q}$ and $\mathcal{Q}^s$), we obtain

\[
T^s(z)={\frac {1-2z-\sqrt {1-4z}}{2z}}, \quad T(z)={\frac {1-4\,z-\sqrt {1-4z}}{8\,z-2}},
\]

\[
Q^s(z)={\frac {1-z-\sqrt {1-2z-3\,{z}^{2}}}{2z}}, \mbox{ and } Q(z)={\frac {1-3\,z-\sqrt {1-2z-3\,{z}^{2}}}{6\,z-2}}.
\]

The coefficients of $z^n$ in the Taylor expansion of $Q^s(z)$ (resp. $Q(z)$, $T^s(z)$ and $T(z)$) generate a shift of the Motzkin
sequence A001006  (resp. A005773, the Catalan sequence A000108 and A001700) in \cite{Slo}).

Now, we construct a bijection from $\mathcal{M}_{n}$ to the set $\mathcal{T}_n$  of directed animals in the triangular lattice which
transports $\mathcal{M}^{\star}_{n}$ into $\mathcal{Q}_n$. We proceed in two steps. Firstly, we define a bijection  from $\mathcal{T}^s_n$ to $\mathcal{M}^s_n$ for $n\geq 1$, and secondly we extend it from $\mathcal{T}_n$ to
$\mathcal{M}_n$. For the first step, and according to the above bijection $\Phi$ from $\mathcal{M}^s_n$ to $\mathcal{D}_n$, it suffices to define a one-to-one correspondence $\Psi$ between $\mathcal{T}^s_n$ and
 $\mathcal{D}_n$. Letting $A$ be a directed animal in  $\mathcal{T}^s$, we define $\Psi(A)$ with respect to its four possible factorizations:
\begin{itemize}[itemsep=3pt]
\item if $A$ satisfies ($i$) then  $\Psi(A)=UD$,

\item if $A$ satisfies ($ii$) then $\Psi(A)=U \Psi(B) D$,

\item if $A$ satisfies ($iii$) then $\Psi(A)=\Psi(B) UD$,

\item if $A$ satisfies ($iv$) then   $\Psi(A)=\Psi(C)U \Psi(B) D$.
\end{itemize}
Due to the recursive definition, the image by $\Psi$ of a directed animal in  $\mathcal{T}^s_n$  is a Dyck path of semilength $n$, and the image of an
 element of $\mathcal{Q}^s_n$ is a Dyck path with no pattern $DUD$, {\it i.e.}  in $\mathcal{D}^\star_n$.

\begin{thm}\label{thmTsMs}
For $n\geq 1$, the map $\Phi^{-1}\cdot \Psi$ is a bijection from $\mathcal{T}^s_n$ to  $\mathcal{M}^{s}_n$, and the image of $\mathcal{Q}^{s}_n$ is $\mathcal{M}^{s,\star}_n$.
\end{thm}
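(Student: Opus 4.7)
The plan is to reduce the theorem to showing that $\Psi$ is itself a bijection $\mathcal{T}^s_n\to\mathcal{D}_n$ that restricts to a bijection $\mathcal{Q}^s_n\to\mathcal{D}^\star_n$; composing with the bijection $\Phi^{-1}$ from Theorems~\ref{thmMGD} and~\ref{thmMGDstar} then yields the stated result.

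First I would observe that the four heap factorizations (i)--(iv) of an element of $\mathcal{T}^s$ correspond exactly to the four cases of the \emph{last-arch decomposition} of a nonempty Dyck path. Every nonempty Dyck path $w$ admits a unique factorization $w=\alpha\,U\beta D$, where $\alpha\cdot(U\beta D)$ splits $w$ at its last return to the $x$-axis before the terminal step, so that $\alpha$ is a Dyck path, $U\beta D$ is primitive, and $\beta$ is a Dyck path. The subcases ``$\alpha,\beta$ both empty'', ``$\alpha$ empty and $\beta$ nonempty'', ``$\alpha$ nonempty and $\beta$ empty'', and ``both nonempty'' correspond to cases (i), (ii), (iii), (iv) of the definition of $\Psi$ respectively. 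This provides an explicit recursive inverse $\Psi^{-1}$, and induction on semilength then shows that $\Psi$ is a bijection.

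For the refinement it suffices to prove, for $A\in\mathcal{T}^s_n$, that $A\in\mathcal{Q}^s_n$ if and only if $\Psi(A)\in\mathcal{D}^\star_n$. Since strict heaps are exactly the elements of $\mathcal{T}^s$ whose recursive factorization never uses case (iii), I would show by induction that $\Psi(A)$ contains the pattern $DUD$ precisely when some node of its recursive decomposition uses case (iii). In case (iii) one has $\Psi(A)=\Psi(B)\,UD$ with $\Psi(B)$ a nonempty Dyck path ending with $D$, so the last three steps automatically form $DUD$. In cases (ii) and (iv) the gluings are $U\,\Psi(B)\,D$ and $\Psi(C)\,U\,\Psi(B)\,D$; since any nonempty Dyck path starts with $U$ and ends with $D$, a direct enumeration of the three-letter windows crossing each boundary shows that no new $DUD$ can arise at a junction. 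Hence a $DUD$ in $\Psi(A)$ must either be inherited from $\Psi(B)$ or $\Psi(C)$, or be freshly produced by a case (iii) node, completing the induction.

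The only non-routine step is the boundary analysis in the preceding paragraph: one must check that each of the two or three junction points appearing in cases (ii) and (iv) is incapable of creating a $DUD$, using that any nonempty Dyck path starts with $U$ and ends with $D$. Everything else reduces to the observation that the heap factorization and the last-arch decomposition of Dyck paths are canonically isomorphic.
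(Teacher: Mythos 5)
Your proof is correct, and although it follows the same overall reduction as the paper (via $\Phi^{-1}$ it suffices to analyse $\Psi:\mathcal{T}^s_n\to\mathcal{D}_n$, arguing by induction on the four-case heap factorization), it replaces both of the paper's counting arguments with constructive ones. The paper proves only injectivity of $\Psi$ --- by observing that the images of cases (i)--(iv) are mutually distinguishable through the number of returns to the $x$-axis and whether the path ends in $DUD$ or in $DD$ --- and then invokes the fact that $\mathcal{T}^s_n$ and $\mathcal{D}_n$ are both counted by the $n$th Catalan number; similarly, for the refinement it shows only that $A\in\mathcal{Q}^s_n$ implies $\Psi(A)\in\mathcal{D}^\star_n$ and closes with a second cardinality argument (both sets being Motzkin-counted). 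You instead identify cases (i)--(iv) with the four emptiness patterns of the last-arch decomposition $w=\alpha\,U\beta D$, which hands you an explicit recursive inverse and hence surjectivity directly; and you establish the equivalence ``$\Psi(A)$ contains $DUD$ if and only if some node of the recursive factorization has type (iii)'' in both directions, the nontrivial direction being your junction analysis (since nonempty Dyck paths begin with $U$ and end with $D$, every three-letter window straddling a gluing point in cases (ii) and (iv) is of the form $Ub_1b_2$, $b_{k-1}DD$, $c_{m-1}DU$ or $DUU$, none of which is $DUD$, whereas a type-(iii) gluing always produces one). The paper's distinguishing features for injectivity are in effect detecting the same last-arch structure, so the two proofs are close in spirit; what yours buys is independence from the prior enumeration of $\mathcal{T}^s_n$, $\mathcal{D}_n$, $\mathcal{Q}^s_n$ and $\mathcal{D}^\star_n$, at the cost of the explicit but routine boundary check, which you have carried out correctly.
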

\begin{proof}
Since $\Phi^{-1}$ is a bijection from $\mathcal{D}_n$ to  $\mathcal{M}^{s}_n$, it suffices to prove that $\Psi$ is
 a bijection from $\mathcal{T}^s_n$ to $\mathcal{D}_n$. As these two last sets are both enumerated by the Catalan numbers,
  it suffices to prove the injectivity of $\Psi$.
  We proceed by induction on $n$. The case $n=1$ holds trivially. We assume that $\Psi$ is injective for $k\leq n$, and we prove the result for $n+1$.
  By definition, the image by $\Psi$ of animals satisfying ($i$) and $(ii)$ are Dyck paths with only one return on the $x$-axis, {\it i.e.} with only one down step $D$ that touches the $x$-axis.
   Animals satisfying ($iii$) are sent by $\Psi$
   to Dyck paths ending with $DUD$ and with at least two return on the $x$-axis. Animals satisfying ($iv$) are sent to Dyck paths with at least two down steps at the end, and with at least two returns. Then, for $A,A'\in \mathcal{T}^s_{n+1}$,
  $\Psi(A)=\Psi(A')$ implies that $A$ and $A'$ belong to the same case ($i$), ($ii$), ($iii$) or ($iv$). The recurrence hypothesis induces $A=A'$ which completes
  the induction.
  Moreover, in the case where $A\in \mathcal{Q}^s_n$, it does not satisfy $(iii)$ and this  implies that $\Psi(A)$ is a Dyck path avoiding $DUD$. Finally, a
  cardinality argument proves that $\Psi(\mathcal{Q}^{s}_n)=\mathcal{M}^{s,\star}_n$.
\end{proof}

Now we  extend the map $\Psi$ from $\mathcal{T}_n$ to $\mathcal{M}_n$ as follows. Let $A$ be a directed animal in  $\mathcal{T}_n\backslash \mathcal{T}^s_n$,
then $A$ can be factorized as ($v$) with $B\in\mathcal{T}^s$ and $C\in\mathcal{T}$. In the subcase where $C\in \mathcal{T}\backslash \mathcal{T}^s$, $C$ satisfies
the case ($v$), and let $D\in\mathcal{T}^s$, $E\in\mathcal{T}$ be the two parts of its factorization. According to these two cases, we set:

\[
\Psi(A)=\left\{\begin{array}{ll}
        \Psi(B)\Psi(C)^r &  \mbox{ if } C\in \mathcal{T}^s,\\
        \Psi(B)\Psi(D)^r\Psi(E) &  \mbox{ otherwise, }\\
   \end{array}\right.
\]
where $P^r$ is obtained from $P$ by reading the Dyck path $P$ from right to left (for instance, if $P=UUDUUDDD$ then $P^r=DDDUUDUU$). Less formally, $\Psi$ maps successive components from $\mathcal{T}^s$ to Dyck paths alternately above and below the $x$-axis. See Figure~\ref{fig3} for an
  illustration of the map $\Psi$.

\begin{thm}\label{thmTM}
For $n\geq 1$, the map $\Phi^{-1}\cdot \Psi$ is a bijection from $\mathcal{T}_n$ to  $\mathcal{M}_n$, and the image of $\mathcal{Q}_n$ is   $\mathcal{M}^{\star}_n$.
\end{thm}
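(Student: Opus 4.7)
The plan is to reduce the statement, via Theorems~\ref{thmMGD} and~\ref{thmMGDstar}, to showing that $\Psi$ is a bijection from $\mathcal{T}_n$ to $\mathcal{GD}_n$ whose restriction to $\mathcal{Q}_n$ is a bijection onto $\mathcal{GD}^\star_n$. Indeed, since $\Phi^{-1}$ already identifies $\mathcal{GD}_n$ with $\mathcal{M}_n$ and $\mathcal{GD}^\star_n$ with $\mathcal{M}^\star_n$, composing with such a $\Psi$ will immediately yield the claimed bijections.

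To analyse $\Psi$ globally, I would iterate factorization~($v$): every $A\in\mathcal{T}_n$ can be written in a unique way as a stack of maximal subdiagonal ``storeys'' $B_1,B_2,\ldots,B_\ell\in\mathcal{T}^s$, obtained by repeatedly peeling off the lower component of a $(v)$-factorization until a purely subdiagonal residue remains. Unfolding the recursive definition of $\Psi$ then gives
\[
\Psi(A) \;=\; \Psi(B_1)\,\Psi(B_2)^r\,\Psi(B_3)\,\Psi(B_4)^r\,\cdots,
\]
a concatenation of nonempty Dyck paths (by Theorem~\ref{thmTsMs}), alternately traced above and below the $x$-axis. Since this concatenation starts with $U$, has total semilength $n$, and returns to the $x$-axis, it lies in $\mathcal{GD}_n$.

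For injectivity I would recover the storey decomposition from $\Psi(A)$: because each $\Psi(B_i)$ lies weakly above and each $\Psi(B_i)^r$ lies weakly below the $x$-axis, the altitude sign changes of $\Psi(A)$ exactly locate the junctions between successive storeys. These canonical cuts split $\Psi(A)$ back into the words $\Psi(B_i)$ (up to reversal at even-indexed storeys); injectivity of $\Psi$ on $\mathcal{T}^s$ (Theorem~\ref{thmTsMs}) and invertibility of $P\mapsto P^r$ then recover each $B_i$, hence $A$. Surjectivity will follow from the cardinality identity $|\mathcal{T}_n| = \binom{2n-1}{n} = |\mathcal{GD}_n|$.

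For the restriction to $\mathcal{Q}_n$, the key observation is that $DUD$ is a palindrome, so $\Psi(B_i)^r$ avoids $DUD$ if and only if $\Psi(B_i)$ does. Moreover, at every junction in the decomposition of $\Psi(A)$ the two adjoining steps form $DD$ (entering the region below the axis) or $UU$ (emerging from it), so no $DUD$ can straddle a junction. Combining these remarks with Theorem~\ref{thmTsMs} gives $\Psi(A)\in\mathcal{GD}^\star_n$ iff every $B_i\in\mathcal{Q}^s$, iff $A\in\mathcal{Q}$. The equality $\Psi(\mathcal{Q}_n) = \mathcal{GD}^\star_n$ then follows by a cardinality match (both sizes agree with the $n$-th term of A005773, using Theorem~\ref{thmMstarGenFun}). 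The main delicate points will be verifying the uniqueness of the iterated $(v)$-decomposition and the junction analysis; once these are settled, the rest is bookkeeping.
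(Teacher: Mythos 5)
Your proposal is correct and follows essentially the same route as the paper's proof: reduce via $\Phi$ to showing $\Psi\colon\mathcal{T}_n\to\mathcal{GD}_n$ is injective (with surjectivity supplied by the cardinality match), handle the subdiagonal pieces by Theorem~\ref{thmTsMs}, and use the fact that $DUD$ is preserved under path reversal for the restriction to $\mathcal{Q}_n$. The only difference is presentational: you unfold the paper's induction on the $(v)$-factorization into the explicit alternating decomposition $\Psi(B_1)\Psi(B_2)^r\Psi(B_3)\cdots$, which also makes explicit the junction analysis (no $DUD$ can straddle a $DD$ or $UU$ junction) that the paper leaves implicit.
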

\begin{proof}
Let us  prove that $\Psi$ is a bijection from $\mathcal{T}_n$ to $\mathcal{M}_n$. As these two sets have the same cardinality, it suffices to  prove the injectivity of $\Psi$.
Using Theorem~\ref{thmTsMs}, it remains to prove that directed animals $A\in\mathcal{T}_n\backslash\mathcal{T}_n^s$ are sent bijectively by $\Psi$ to Grand-Dyck paths in $\mathcal{GD}_n\backslash\mathcal{D}_n$.
 Due to the definition of $\Psi$ whenever $A\in\mathcal{T}_n\backslash\mathcal{T}_n^s$, we have either $\Psi(A)=\Psi(B)\Psi(C)^r$ or $\Psi(A)=\Psi(B)\Psi(D)^r\Psi(E)$ with $B,C,D\in\mathcal{T}^s$ and $E\in\mathcal{T}$.
 Then, the path $\Psi(A)$ starts with an up-step (the first step of the non-empty  Dyck path $\Psi(B)$), and since the first step of $\Psi(C)^r$ (resp. $\Psi(D)^r$) is a down-step, $\Psi(A)$ crosses the $x$-axis which ensures that $\Psi(B)\Psi(C)^r$ (resp. $\Psi(B)\Psi(D)^r$) belongs to $\mathcal{GD}_n\backslash\mathcal{D}_n$. We complete the proof with a simple induction on $n$.
Whenever $A\in\mathcal{Q}$, Theorem~\ref{thmTsMs} ensures that $\Psi(B)$, $\Psi(C)$ and $\Psi(D)$ avoid the pattern $DUD$. By symmetry, the paths $\Psi(C)^r$ and $\Psi(D)$ avoid $DUD$ which implies that the Grand-Dyck
 paths $\Psi(B)\Psi(C)^r$ and $\Psi(B)\Psi(D)^r$ do not contain $DUD$. By induction, $\Psi(A)$ belongs to $\mathcal{M}^{\star}_n$.
\end{proof}

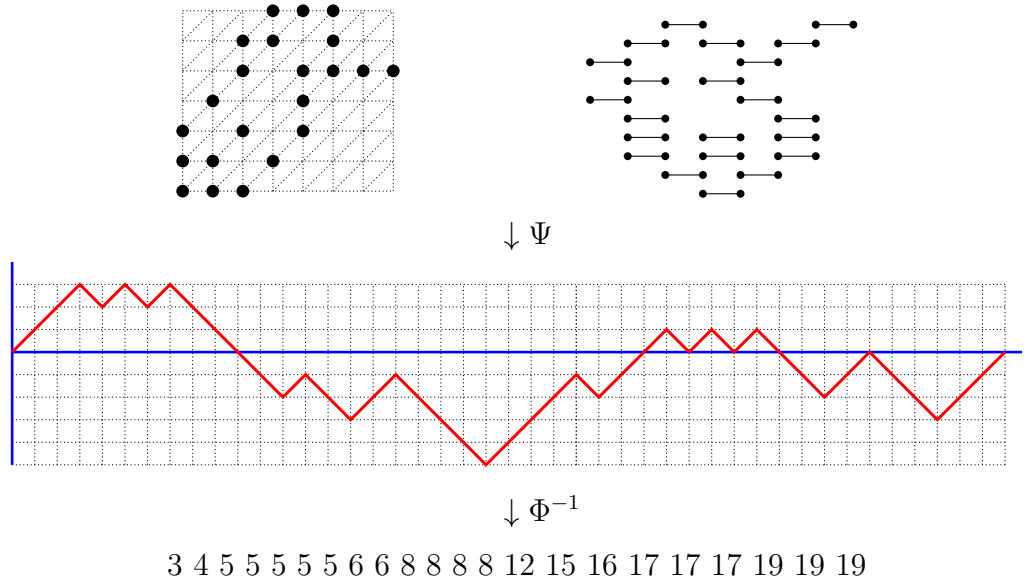
\begin{figure}[ht]
	
		\begin{center}
\begin{tikzpicture}[scale=0.4]
				\draw [\styleGrille] (\A,\A) -- (\A,\G);
				\draw [\styleGrille] (\B,\A) -- (\B,\G);
				\draw [\styleGrille] (\C,\A) -- (\C,\G);
				\draw [\styleGrille] (\D,\A) -- (\D,\G);
				\draw [\styleGrille] (\E,\A) -- (\E,\G);
				\draw [\styleGrille] (\F,\A) -- (\F,\G);
				\draw [\styleGrille] (\G,\A) -- (\G,\G);
				\draw [\styleGrille] (\H,\A) -- (\H,\G);
				
				\draw [\styleGrille] (\A,\A) -- (\H,\A);
				\draw [\styleGrille] (\A,\B) -- (\H,\B);
				\draw [\styleGrille] (\A,\C) -- (\H,\C);
				\draw [\styleGrille] (\A,\D) -- (\H,\D);
				\draw [\styleGrille] (\A,\E) -- (\H,\E);
				\draw [\styleGrille] (\A,\F) -- (\H,\F);
				\draw [\styleGrille] (\A,\G) -- (\H,\G);

				\draw [\styleGrille] (\A,\A) -- (\G,\G);
				\draw [\styleGrille] (\A,\B) -- (\F,\G);
				\draw [\styleGrille] (\A,\C) -- (\E,\G);
				\draw [\styleGrille] (\A,\D) -- (\D,\G);
				\draw [\styleGrille] (\A,\E) -- (\C,\G);
				\draw [\styleGrille] (\A,\F) -- (\B,\G);

                \draw [\styleGrille] (\B,\A) -- (\H,\G);
				\draw [\styleGrille] (\C,\A) -- (\H,\F);
				\draw [\styleGrille] (\D,\A) -- (\H,\E);
				\draw [\styleGrille] (\E,\A) -- (\H,\D);
				\draw [\styleGrille] (\F,\A) -- (\H,\C);
				\draw [\styleGrille] (\G,\A) -- (\H,\B);


                \pointt{\A}{\A} 
				\pointt{\A}{\B} 
				\pointt{\B}{\A} 
				\pointt{\A}{\C} 
				
                \pointt{\B}{\B} 
                \pointt{\C}{\C} 

				 \pointt{\C}{\F} 
 \pointt{\D}{\F} 
 \pointt{\D}{\G} 
  \pointt{\E}{\G} 
   \pointt{\F}{\G} 
                   \pointt{\C}{\A} 
				\pointt{\D}{\B} 
				\pointt{\E}{\C} 

                \pointt{\G}{\E} 
				\pointt{\E}{\E} 
				\pointt{\E}{\D} 
				\pointt{\F}{\E} 

                 \pointt{\B}{\D} 
				\pointt{\C}{\E} 
				\pointt{\H}{\E} 
                \pointt{\F}{\F}

			\end{tikzpicture}\qquad\qquad\qquad
\begin{tikzpicture}[scale=0.5  ]

				\draw (\a,\a) -- (\b,\a);
                \point{\a}{\a} 
				\point{\b}{\a} 

                \draw (\b,\A) -- (\c,\A);
                \point{\b}{\A} 
				\point{\c}{\A} 

				\draw (\c,\b) -- (\d,\b);
                \point{\d}{\b} 
				\point{\c}{\b} 

                 \draw (\c,\B) -- (\d,\B);
                \point{\c}{\B} 
				\point{\d}{\B} 

				\draw (\c,\c) -- (\d,\c);
                \point{\c}{\c} 
				\point{\d}{\c} 

                 \draw (\c,\e) -- (\d,\e);
                \point{\d}{\e} 
				\point{\c}{\e} 

                 \draw (\d,\E) -- (\e,\E);
                \point{\d}{\E} 
				\point{\e}{\E} 

				\draw (\c,\C) -- (\b,\C);
                \point{\b}{\C} 
				\point{\c}{\C} 

                 \draw (\b,\d) -- (\a,\d);
                \point{\b}{\d} 
				\point{\a}{\d} 

				\draw (\b,\D) -- (\c,\D);
                \point{\b}{\D} 
				\point{\c}{\D} 

               \draw (\a,\A) -- (-1,\A);
                \point{\a}{\A} 
				\point{-1cm}{\A} 

                 \draw (-1,\b) -- (-2,\b);
                \point{-2cm}{\b} 
				\point{-1cm}{\b} 

               \draw (-1,\B) -- (-2,\B);
                \point{-2cm}{\B} 
				\point{-1cm}{\B} 

                 \draw (-1,\c) -- (-2,\c);
                \point{-2cm}{\c} 
				\point{-1cm}{\c} 

                 \draw (-2,\C) -- (-3,\C);
                \point{-3cm}{\C} 
				\point{-2cm}{\C} 

                \draw (\a,\b) -- (\b,\b);
                \point{\a}{\b} 
				\point{\b}{\b} 

                \draw (\a,\B) -- (\b,\B);
                \point{\a}{\B} 
				\point{\b}{\B} 

                \draw (-1,\d) -- (-2,\d);
                \point{-2cm}{\d} 
				\point{-1cm}{\d} 

                 \draw (-2,\D) -- (-3,\D);
                \point{-3cm}{\D} 
				\point{-2cm}{\D} 

                \draw (-1,\e) -- (-2,\e);
                \point{-2cm}{\e} 
				\point{-1cm}{\e} 

                 \draw (\a,\e) -- (\b,\e);
                \point{\a}{\e} 
				\point{\b}{\e} 

                 \draw (-1cm,\E) -- (\a,\E);
                \point{\a}{\E} 
				\point{-1cm}{\E} 
			\end{tikzpicture}

\smallskip
$\phantom{\Psi}\downarrow\Psi$
\smallskip

\begin{tikzpicture}[scale=0.15]
            \draw[\styleGrille] (\A,\A)-- (88.5cm,\A);
             \draw[\styleGrille] (\A,\E)-- (88.5cm,\E);
              \draw[solid,line width=0.35mm,color=blue](\A,\E)-- (90cm,\E);
               \draw[\styleGrille] (\A,\G)-- (88.5cm,\G);
               \draw[\styleGrille] (\A,\I)-- (88.5cm,\I);
              \draw[\styleGrille] (\A,\K)-- (88.5cm,\K);
              \draw[\styleGrille] (\A,\C)-- (88.5cm,\C);
              \draw[\styleGrille] (\A,-1.5cm)-- (88.5cm,-1.5cm);
              \draw[\styleGrille] (\A,-3.5cm)-- (88.5cm,-3.5cm);
              \draw[\styleGrille] (\A,-5.5cm)-- (88.5cm,-5.5cm);              
            \draw[solid,line width=0.35mm,color=blue] (\A,-5.5cm) -- (\A,\M);
             \draw[\styleGrille] (\C,-5.5cm) -- (\C,\K);\draw[\styleGrille] (\E,-5.5cm) -- (\E,\K);\draw[\styleGrille] (\G,-5.5cm) -- (\G,\K);
             \draw[\styleGrille] (\I,-5.5cm) -- (\I,\K);\draw[\styleGrille] (\K,-5.5cm) -- (\K,\K);\draw[\styleGrille] (\M,-5.5cm) -- (\M,\K);
             \draw[\styleGrille] (\O,-5.5cm) -- (\O,\K);\draw[\styleGrille] (\Q,-5.5cm) -- (\Q,\K);\draw[\styleGrille] (\S,-5.5cm) -- (\S,\K);
             \draw[\styleGrille] (\U,-5.5cm) -- (\U,\K);\draw[\styleGrille] (\W,-5.5cm) -- (\W,\K);\draw[\styleGrille] (\Y,-5.5cm) -- (\Y,\K);

             \draw[\styleGrille] (\ZZ,-5.5cm) -- (\ZZ,\K);
             \draw[\styleGrille] (\Za,-5.5cm) -- (\Za,\K);
             \draw[\styleGrille] (\Zb,-5.5cm) -- (\Zb,\K);
             \draw[\styleGrille] (\Zc,-5.5cm) -- (\Zc,\K);
             \draw[\styleGrille] (\Zi,-5.5cm) -- (\Zi,\K);
             \draw[\styleGrille] (52.5cm,-5.5cm) -- (52.5cm,\K);
             \draw[\styleGrille] (54.5cm,-5.5cm) -- (54.5cm,\K);
             \draw[\styleGrille] (56.5cm,-5.5cm) -- (56.5cm,\K);
             \draw[\styleGrille] (58.5cm,-5.5cm) -- (58.5cm,\K);
             \draw[\styleGrille] (50.5cm,-5.5cm) -- (50.5cm,\K);
             \draw[\styleGrille] (48.5cm,-5.5cm) -- (48.5cm,\K);
             \draw[\styleGrille] (46.5cm,-5.5cm) -- (46.5cm,\K);
             \draw[\styleGrille] (44.5cm,-5.5cm) -- (44.5cm,\K);
             \draw[\styleGrille] (36.5cm,-5.5cm) -- (36.5cm,\K);
             \draw[\styleGrille] (38.5cm,-5.5cm) -- (38.5cm,\K);
             \draw[\styleGrille] (40.5cm,-5.5cm) -- (40.5cm,\K);
             \draw[\styleGrille] (42.5cm,-5.5cm) -- (42.5cm,\K);
             \draw[\styleGrille] (34.5cm,-5.5cm) -- (34.5cm,\K);
             \draw[\styleGrille] (60.5cm,-5.5cm) -- (60.5cm,\K);

             \draw[\styleGrille] (62.5cm,-5.5cm) -- (62.5cm,\K);
             \draw[\styleGrille] (64.5cm,-5.5cm) -- (64.5cm,\K);
             \draw[\styleGrille] (66.5cm,-5.5cm) -- (66.5cm,\K);
             \draw[\styleGrille] (68.5cm,-5.5cm) -- (68.5cm,\K);
             \draw[\styleGrille] (70.5cm,-5.5cm) -- (70.5cm,\K);
             \draw[\styleGrille] (72.5cm,-5.5cm) -- (72.5cm,\K);
             \draw[\styleGrille] (74.5cm,-5.5cm) -- (74.5cm,\K);
             \draw[\styleGrille] (76.5cm,-5.5cm) -- (76.5cm,\K);

             \draw[\styleGrille] (82.5cm,-5.5cm) -- (82.5cm,\K);
             \draw[\styleGrille] (84.5cm,-5.5cm) -- (84.5cm,\K);
             \draw[\styleGrille] (86.5cm,-5.5cm) -- (86.5cm,\K);
             \draw[\styleGrille] (88.5cm,-5.5cm) -- (88.5cm,\K);
             \draw[\styleGrille] (78.5cm,-5.5cm) -- (78.5cm,\K);
             \draw[\styleGrille] (80.5cm,-5.5cm) -- (80.5cm,\K);


            \draw[solid,line width=0.4mm,color=red] (\A,\E)--(\G,\K) -- (\I,\I) -- (\K,\K) -- (\M,\I) -- (\O,\K) -- (\Y,\A) -- (26.5cm,\C) -- (30.5cm,-1.5cm) -- (34.5cm,\C)--(42.5cm,-5.5cm)--(50.5cm,\C)--(52.5cm,\A)--(58.5cm,\G)--(60.5cm,\E)--(62.5cm,\G)--(64.5cm,\E)--(66.5cm,\G)--(72.5cm,\A)--(76.5cm,\E)--
            (82.5cm,-1.5cm)--(88.5cm,\E);
         \end{tikzpicture}

         \medskip
         $\phantom{\Phi^{-1}}\downarrow\Phi^{-1}$
         \medskip

          { $3~4~5~5~5~5~5~6~6~8~8~8~8~12~15~16~17~17~17~19~19~19$}
	\end{center}
	\caption{Bijection $\Phi^{-1}\Psi$ between directed animals  and multisets via Grand-Dyck paths.}
\label{fig3}
\end{figure}
Now we define some statistics and parameters on $\mathcal{T}_n$, $\mathcal{M}_n$ and $\mathcal{GD}_n$, and we show how the  bijections $\Phi$, $\Psi$ and $\Phi^{-1}\cdot \Psi$ establish correspondences between them.
Table~\ref{tab2} summarizes these correspondences.

For a directed animal $A\in\mathcal{T}_n$, we set:
\begin{itemize}[itemsep=3pt]
\item $\mathbf{Area}(A)=$ number of points in $A$,

\item $\mathbf{Lw}(A)=$ left width, {\it i.e.} $\max\{i\geq 0 \mbox{ such that  the line } y=x+i \mbox{ meets } A\}$,

\item $\mathbf{Rw}(A)=$ right width, {\it i.e.} $\max\{i\geq 1 \mbox{ such that  the line } y=x-i+1 \mbox{ meets } A\}$,

\item $\mathbf{Width}(A)=\mathbf{Lw}(A)+\mathbf{Rw}(A)=$ width,

\item $\mathbf{Diag}(A)=$ number of \raisebox{-0.22cm}{\DUD} in $A$, where $\times$ means a site without point in $A$,

\item $\mathbf{Nbp}(A,i)=$ number of points of $A$  on the line $y=x-i+1$,
\end{itemize}
\medskip
For a multiset $\pi\in\mathcal{M}_n$, we define $\delta(\pi_i)=0$ if $\pi_i<i$ and 1 otherwise, and we set:
\begin{itemize}[itemsep=3pt]
\item $\mathbf{Length}(\pi)=$ $n$,

\item $\mathbf{Cross}(\pi)= \mbox{card}\{i\in[n-1], \delta(\pi_i)\neq\delta(\pi_{i+1})\}$,

\item $\mathbf{Adj}(\pi)=$ number of adjacencies, {\it i.e.}, $\mbox{card}\{i\in[n-1], \mbox{such that } \pi_{i+1}=\pi_i+1\}$,

\item $\mathbf{Gap}{}(\pi,i)= |\pi_i-i|-c_i$ where $c_i= \mbox{card}\{j\leq i-1, \delta(\pi_j)\neq\delta(\pi_{j+1})\}$,

\item $\mathbf{Gap}(\pi)= \max_{i\in[n]}\mathbf{Gap}{}(\pi,i)$.
\end{itemize}
\medskip
For a Grand-Dyck path $P\in\mathcal{GD}_n$, the height {\bf h}$(a,b)$ of a point $(a,b)\in P$ is the ordinate $b$, and $\mathbf{h}(P)=\max\{\mathbf{h}(a,b): (a,b)\in P\}$. Here we consider a new height function
 defined by {\bf Height}$(a,b)=|b|-c_a$ where $c_a$ is the number of the $x$-axis crossings before the line $x=a$, and we set:
\begin{itemize}[itemsep=3pt]
\item $\mathbf{Semilength}(P)=$ number of up-steps $U$,

\item $\mathbf{Cross}(P)=$  number of crossings of the $x$-axis,

\item $\mathbf{Height}(P)= \max_{(a,b)\in P}\mathbf{Height}(a,b)$,

\item $\mathbf{Dud}(P)=$ number of pattern $DUD$,

\item $\mathbf{Nbu}(P,i)=$ number of $U$ having endpoint $(a,b)$ satisfying {\bf Height}$(a,b)=i+1$.

\end{itemize}

\begin{table}[ht]\begin{center}
\begin{tabular}{p{3.2cm} p{4.8cm} p{3.6cm}}
$A\in \mathcal{T}_n$ &$P=\Psi(A)\in\mathcal{GD}_{n}$& $\pi=\Phi^{-1}(P)\in\mathcal{M}_{n}$\\
\hline
$\mathbf{Area}(A)$ & $\mathbf{Semilength}(P)$ & $\mathbf{Length}(\pi)$\\

$\mathbf{Lw}(A)$ & $\mathbf{Cross}(P)$ & $\mathbf{Cross}(\pi)$ \\

$\mathbf{Rw}(A)$ & $\mathbf{Height}(P)$ & $\mathbf{Gap}(\pi)$ \\

$\mathbf{Width}(A)$ & $\mathbf{Cross}(P)+\mathbf{Height}(P)$ & $\mathbf{Cross}(\pi)+\mathbf{Gap}(\pi)$\\

$\mathbf{Diag}(A)$ & $\mathbf{Dud}(P)$ & $\mathbf{Adj}(\pi)$\\

$\mathbf{Nbp}(A,i)$ & $\mathbf{Nbu}(P,i)$ & $\mathbf{Gap}(\pi,i)$\\
\hline
\end{tabular}\end{center}
\caption{Statistic correspondences  by the bijections $\Psi$ and $\Phi$. }
\label{tab2}
\end{table}

\begin{thm}\label{thmStatistics}
The bijections $\Phi$ and $\Psi$ induce correspondences between statistics as summarized in Table~\ref{tab2}.
\end{thm}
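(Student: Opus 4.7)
The plan is to split the theorem into two independent pieces: (a) the correspondences induced by $\Psi : \mathcal{T}_n \to \mathcal{GD}_n$, and (b) those induced by $\Phi^{-1} : \mathcal{GD}_n \to \mathcal{M}_n$. Part (a) will be handled by structural induction on the recursive factorizations $(i)$--$(v)$ of a directed animal; part (b) by a direct geometric analysis of the path construction from the graphical representation of a multiset described in Section~2. Composing the two gives the remaining column of the table.

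For (a), at each inductive step one reads off from the factorization how each statistic of $A$ relates to the corresponding statistic of $\Psi(A)$. The equality $\mathbf{Area}(A) = \mathbf{Semilength}(\Psi(A))$ is immediate, since each node of $A$ contributes exactly one $UD$ pair in the recursion. The equality $\mathbf{Diag}(A) = \mathbf{Dud}(\Psi(A))$ is checked case by case: a local pattern \raisebox{-0.22cm}{\DUD} in the heap arises precisely when the recursion invokes case $(iii)$, producing a terminal $DUD$ factor in $\Psi(A)$, whereas cases $(i),(ii),(iv)$ cannot create such a factor at the junction of their blocks. For $\mathbf{Lw}(A) = \mathbf{Cross}(\Psi(A))$, note that animals in $\mathcal{T}^s$ (left width $0$) are sent to Dyck paths (zero crossings), while each nested application of case $(v)$ inserts a reversed Dyck block $\Psi(C)^r$ or $\Psi(D)^r$ beginning with $D$, contributing exactly one more crossing of the $x$-axis and one more unit of left width. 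The correspondences $\mathbf{Rw} \leftrightarrow \mathbf{Height}$ and $\mathbf{Nbp} \leftrightarrow \mathbf{Nbu}$ rest on the same alternation: successive $\mathcal{T}^s$-factors of $A$ are placed by $\Psi$ on alternating sides of the $x$-axis, so the right width matches the crossing-corrected height, and the identity holds diagonal by diagonal.

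For (b), the correspondences follow directly from the construction of $\Phi$. The identity $\mathbf{Length}(\pi) = \mathbf{Semilength}(\Phi(\pi))$ is by construction, and $\mathbf{Adj}(\pi) = \mathbf{Dud}(\Phi(\pi))$ follows from the local observation that an adjacency $\pi_{i+1} = \pi_i+1$ is exactly what forces the pattern $DUD$ in the image path, while equal consecutive values produce $UU$ and strictly larger gaps produce $D$ runs of length at least two surrounding a $U$ plateau. The indicator $\delta(\pi_i)$ records which side of the main diagonal the point $(i, \pi_i)$ lies on; since $\Phi$ unfolds the graphical staircase about the diagonal into a path about the $x$-axis, each change of $\delta$ is exactly one crossing of the $x$-axis, giving $\mathbf{Cross}(\pi) = \mathbf{Cross}(\Phi(\pi))$. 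The identities $\mathbf{Gap}(\pi, i) = \mathbf{Nbu}(\Phi(\pi), i)$ and $\mathbf{Gap}(\pi) = \mathbf{Height}(\Phi(\pi))$ follow from a reflection argument: $|\pi_i - i|$ is the signed distance from the diagonal, and the correction $c_i$ reorients this quantity after each crossing, precisely as the unfolded $\mathbf{Height}$ function does on the path side.

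The main obstacle I expect is the group of correspondences involving $\mathbf{Rw}$, $\mathbf{Height}$, $\mathbf{Gap}$ and the per-level statistics $\mathbf{Nbu}$, $\mathbf{Nbp}$, $\mathbf{Gap}(\pi, i)$. The delicate point is to check that three distinct notions of \emph{unfolded distance} --- the right width of a heap, the crossing-corrected height of a Grand-Dyck path, and the crossing-corrected diagonal deviation of a multiset --- agree at every diagonal level $i$, and not only in their maxima. Once this is settled by tracking how each application of case $(v)$ reflects the active region of the animal into the new below-$x$-axis block on the path side, the identity $\mathbf{Width} = \mathbf{Cross}+\mathbf{Height} = \mathbf{Cross}+\mathbf{Gap}$ is a formal consequence and the theorem follows.
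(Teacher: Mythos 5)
Your proposal matches the paper's proof in both structure and substance: the paper likewise treats the $\Phi$-column as a direct check on the staircase construction and establishes the $\Psi$-column by induction on the factorizations $(i)$--$(v)$, with the crossing count growing by one at each application of case $(v)$ and the $\mathbf{Rw}$/$\mathbf{Height}$ and $\mathbf{Nbp}$/$\mathbf{Nbu}$ identities tracked through the alternating reflected blocks exactly as you describe. The ``main obstacle'' you flag (matching the three unfolded-distance notions level by level, not just in their maxima) is precisely where the paper's proof spends its effort, via explicit $\max$ formulas for the case-$(v)$ factorizations.
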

\begin{proof}
The statistic correspondences induced by $\Phi$ are easy to check. So, we  only prove  the  correspondences generated by $\Psi$ from directed animals to Grand-Dyck paths.

When $A\in\mathcal{T}^s$, we have $\mathbf{Lw}(A)=\mathbf{Cross}(\Psi(A))=0$. When $A\in\mathcal{T}\backslash\mathcal{T}^s$, $A$ satisfies $(v)$ with
$B\in\mathcal{T}^s$ and $C\in\mathcal{T}$. Then $\mathbf{Lw}(A)=1+ \mathbf{Lw}(C)$. We assume the recurrence hypothesis $\mathbf{Lw}(C)=\mathbf{ Cross}(\Psi(C))$, which
implies $\mathbf{Lw}(A)=1+\mathbf{Cross}(\Psi(C))$. Using the recursive definition of $\Psi$, we have $\mathbf{Cross}(\Psi(A))=1+\mathbf{Cross}(\Psi(C))$ which
 gives by induction $\mathbf{Lw}(A)=\mathbf{Cross}(\Psi(A))$.

 When $A\in\mathcal{T}^s$, it satisfies  $(i)$, $(ii)$, $(iii)$ or $(iv)$, and the recursive definition of $\Psi$ implies that
 $\mathbf{Rw}(A)=\mathbf{h}(\Psi(A))=\mathbf{Height}(\Psi(A))$. Otherwise, if $A$  is factorized as $(v)$ with $B\in\mathcal{T}^s$ and $C\in\mathcal{T}^s$,
  then
  \begin{align*}
  \mathbf{Rw}(A) & \;=\; \max \{\mathbf{Rw}(B),\mathbf{Rw}(C)-1\}               \\
                 & \;=\; \max \{\mathbf{h}(\Psi(B)),\mathbf{h}(\Psi(C))-1\}     \\
                 & \;=\; \max\{ \mathbf{Height}(a,b), (a,b)\in \Psi(B)\Psi(C)^r \}
  ,
  \end{align*}
   which is equal to $\mathbf{Height}(\Psi(A))$. If $A$  is factorized as $(v)$ with $B\in\mathcal{T}^s$ and $C\in\mathcal{T}\backslash\mathcal{T}^s$,
   then $C$ can be factorized as $(v)$ with
$D\in\mathcal{T}^s$ and $E\in\mathcal{T}$, and using an induction we have:
  \begin{align*}
  \mathbf{Rw}(A) & \;=\; \max\{\mathbf{h}(\Psi(B)),\mathbf{h}(\Psi(D))-1,\mathbf{Height}(\Psi(E))-2\}   \\
                 & \;=\; \max\{ \mathbf{Height}(a,b), (a,b)\in \Psi(B)\Psi(D)^r \Psi(E)\}
  ,
  \end{align*}
which gives exactly
$\mathbf{Height}(\Psi(A))$.

 When $A\in\mathcal{T}^s$, it  satisfies  $(i)$, $(ii)$, $(iii)$ or $(iv)$, and the recursive definition of $\Psi$ implies that
 $\mathbf{Nbp}(A,i)=\mathbf{Nbu}(\Psi(A),i)$. Otherwise, if $A$  is factorized as $(v)$ with $B\in\mathcal{T}^s$ and $C\in\mathcal{T}^s$,
  then $\mathbf{Nbp}(A,i)=\mathbf{Nbp}(B,i)+\mathbf{Nbp}(C,i+1)$, and using the recurrence hypothesis it is equal to
  \[
  \mathbf{Nbu}(\Psi(B),i)+ \mathbf{ Nbu}(\Psi(C),i+1)=\mathbf{Nbu}(\Psi(B)\Psi(C)^r,i)=\mathbf{Nbu}(\Psi(A),i).
  \]
   Whenever $A$ is factorized as $(v)$ with $D\in\mathcal{T}^s$ and $E\in\mathcal{T}$, a similar argument completes the proof.
\end{proof}

Due to the symmetry $\sigma$ about the diagonal $y=x$, the two statistics {\bf Lw}$(\cdot)+1$ and {\bf Rw}$(\cdot)$ have the same distribution on directed animals in $\mathcal{T}$
and $\mathcal{Q}$. Using Theorem~\ref{thmStatistics} and Table~\ref{tab2}, this induces that {\bf Cross}$(\cdot)+1$ and {\bf Height}$(\cdot)$ also have the same distribution
 in $\mathcal{GD}$ and $\mathcal{GD}^{\star}$.


\begin{thebibliography}{10}

\bibitem{Alb} M. Albenque.
\newblock A note on the enumeration of directed animals via gas considerations.
\newblock {\em Ann. Appl. Probab.}, 19(5), 1860-1879, 2009.

\bibitem{Bach} A. Bacher.
\newblock Average site perimeter of directed animals on the two-dimensional lattices.
\newblock {\em Discrete Mathematics}, 312(5), 1038--1058, 2012.

\bibitem{Band} C. Banderier, C. Krattenthaler, A. Krinik, D. Kruchinin, V. Kruchinin, D. Nguyen, and M. Wallner.
\newblock Explicit Formulas for Enumeration of Lattice Paths: Basketball and the Kernel Method.
\newblock In G.E. Andrews, C. Krattenthaler, and A. Krinik (eds), {\em Lattice Path Combinatorics and Applications}, 78--118, Springer, 2019.

\bibitem{Barc} E. Barcucci, A. Del Lungo, E. Pergola,  and R. Pinzani.
\newblock Directed animals, forests and permutations.
\newblock {\em Discrete Mathematics}, 204, 41--71, 1999.

\bibitem{Betr} J. Betrema, and J.G. Penaud.
\newblock Animaux et arbres guingois.
\newblock {\em Theoretical Computer Science}, 117(1-2), 67--89, 1993.

\bibitem{Bousq} M. Bousquet-M{\'e}lou.
\newblock Une bijection entre les polyominos convexes dirig{\'e}s et les mots de Dyck bilat{\`e}res.
\newblock {\em RAIRO - Theoretical Informatics and Applications - Informatique Théorique et Applications}, 26(3), 205--219, 1992.

\bibitem{Bou1} M. Bousquet-M{\'e}lou.
\newblock New enumerative results on two-dimensional directed animals.
\newblock {\em Discrete Mathematics}, 180(1-3), 73--106, 1998.

\bibitem{Bou} M. Bousquet-M{\'e}lou, and A. Rechnitzer.
\newblock Lattice animals and heaps of dimers.
\newblock {\em Discrete Mathematics}, 258, 235--274, 2002.

\bibitem{Broa} S.R. Broadbent, and J.M. Hammersley.
\newblock Percolation processes. I. Crystals and mazes.
\newblock {\em Proc. Camb. Phil. Soc.}, 53, 629--641, 1957.

\bibitem{CoGu}  A.R.  Conway, and  A.J.  Guttmann.
\newblock  On  two-dimensional  percolation.
\newblock  {\em J.  Phys.  A:  Math. Gen.}, 28, 891--904, 1995.

\bibitem{Conw}  A.R. Conway.
\newblock Some exact results for moments of 2D-directed animals.
\newblock {\em J. Phys. A: Mathematical and General}, 29(17), 5273--5283, 1996.

\bibitem{Dha} D. Dhar.
\newblock Equivalence of two-dimensional directed-site animal problem to Baxter's hard square lattice gas model.
\newblock {\em Phys. Rev. Lett.}, 49, 959--962, 1982.

\bibitem{Dhar} D. Dhar, M.K. Phani, and M. Barma.
\newblock Enumeration of directed site animals on two-dimensional lattices.
\newblock {\em J. Phys. A: Math. Gen.}, 15, 279--284, 1982.

\bibitem{FS} P. Flajolet, and R. Sedgewick.
\newblock Analytic combinatorics. Cambridge University Press, 2009.

\bibitem{Gou} D. Gouyou-Beauchamps, and G. Viennot.
\newblock Equivalence of two-dimensional directed animal problem to a one-dimensional path problem.
\newblock {\em Advances in Appl. Math.}, 9, 334--357, 1988.

\bibitem{Har} F. Harary.
\newblock Graph Theory and Theoretical Physics.
\newblock Ed. Harary, Academic Press, New York, 1967.

\bibitem{Knu} D.E. Knuth.
\newblock The art of computer programming. Vol. 4, Addison-Wesley, Reading MA, 1973, Third edition, 1997.

\bibitem{Leb} Y. Le Borgne, and J.F. Marckert.
\newblock Directed animals and gas models revisited.
\newblock {\em The Electronic J. of Combinatorics}, 14, \# R71, 2007.

\bibitem{Marck} J.F. Marckert.
\newblock Directed animals, quadratic systems and rewriting systems.
\newblock {\em The Electronic J. of Combinatorics}, 19(3), \# P45, 2012.

\bibitem{Mer} D. Merlini, R. Sprugnoli, and M. Verri.
\newblock Some statistics on Dyck paths.
\newblock {\em J. Statist. Plann. Inference}, 101, 211--227, 2002.

\bibitem{Sap} A. Sapanoukis, I. Tasoulas and P. Tsikouras.
\newblock Counting strings in Dyck paths.
\newblock {\em Discrete Mathematics}, 307(23), 2909--2924, 2007.

\bibitem{Slo} N.J.A. Sloane.
\newblock On-Line Encyclopedia of Integer Sequences.
\newblock Published electronically at http://oeis.org/.

\bibitem{Stan} R. Stanley.
\newblock Enumerative Combinatorics, Vol. 1. Cambridge University Press, 1997.

\bibitem{Stan2} R. Stanley.
\newblock Enumerative Combinatorics, Vol. 2. Cambridge University Press, 1999.

\bibitem{Sun} Y. Sun.
\newblock The statistic ``number of udu's'' in Dyck paths.
\newblock {\em Discrete Mathematics}, 287, 177--186, 2004.

\bibitem{Vie} G.X. Viennot.
\newblock Heaps of pieces.
\newblock {\em I. Basic definitions and combinatorial lemmas}, In Combinatoire {\'e}num{\'e}rative (Montr{\'e}al, Que., 1985), Vol. 1234 de Lecture notes in Math., 231--350, Springer, Berlin, 1986.

\bibitem{Vie2} G.X. Viennot.
\newblock Multi-directed animals, connected heaps of dimers and Lorentzian triangulations.
\newblock {\em Journal of Physics: Conference Series}, 42, 268--280, 2006.
\end{thebibliography}
\end{document}